\newcommand{\C}{\mathbb{C}}
\newcommand{\R}{\mathbb{R}}
\newcommand{\N}{\mathbb{N}}
\newcommand{\Z}{\mathbb{Z}}
\newcommand{\Hh}{\mathbb{H}}
\DeclareMathOperator{\Tr}{Tr}
\DeclareMathOperator{\vol}{vol}
\DeclareMathOperator{\Res}{Res}
\DeclareMathOperator{\Op}{Op}
\DeclareMathOperator{\OTr}{0-Tr}
\DeclareMathOperator{\arcsinh}{arcsinh}
\newtheorem{theorem}{Theorem}
\newtheorem*{theorem*}{Theorem}
\newtheorem{proposition}{Proposition}[section]
\newtheorem{defprop}[proposition]{Definition-Proposition}
\newtheorem{lemma}[proposition]{Lemma}
\title[Counting Resonances]{Weyl laws for manifolds\\ with hyperbolic cusps.}
\author{Yannick Bonthonneau}
\email{yannick.bonthonneau@univ-rennes1.fr}
\address{IRMAR, UMR 6625, 263 avenue du G\'en\'eral Leclerc, CS 74205
35042 RENNES C\'edex.}
\begin{document}

\begin{abstract}
We give Weyl-type estimates on the natural spectral counting function for manifolds with exact hyperbolic cusps. We consider three different cases: without assumption on the compact part, assuming that periodic geodesics form a measure-zero set, and assuming that there are no conjugate points. In each case, we obtain the same type of remainder as in the corresponding case in the context of compact manifolds.

We also investigate the counting of resonances. In particular, we extend results of Selberg to the case of non-constant, negative curvature metrics, under a genericity assumption. 
\end{abstract}

\maketitle

\section{Introduction}

Starting in the 1980's, a series of papers have investigated the Weyl law asymptotics for manifolds with cusps. It is our purpose here to gather the existing statements in the literature, and complete that list with a few results of our own. Since most if not all the arguments used for cusp manifolds are generalizations of arguments developed first for compact manifolds, it seems that to obtain stronger estimates that those presented here, one would need to first improve the asymptotics in the compact case. This is considered to be a difficult problem.

Let us specify what we mean by a manifold with cusps. It is a connected complete Riemannian manifold $(M,g)$ that can be decomposed as the union of a compact part with boundary $M_0$, of dimension $d+1$, and cusps $Z_1, \dots, Z_\kappa$ with
\begin{equation}\label{eq:def-cusp}
(Z_\ell, g) \simeq \left([a_\ell, +\infty)_y \times \left(\R^d/\Lambda_\ell\right)_{\theta} , \frac{dy^2 + d\theta^2}{y^2}\right),
\end{equation}
where each $\Lambda_\ell$ is a lattice in $\R^d$ of covolume $1$ --- this is a normalization condition. Such a manifold has finite volume. We denote the Laplacian $\Delta$ with the analyst's convention that $-\Delta \geq 0$. We let $y_0 = \max_\ell a_\ell$. 

Before we can state any result, we have to introduce some terminology and theorems

\subsection{Spectral theory for manifolds with cusps}

Being a non-compact manifold, the Laplacian cannot have pure point spectrum. It also has continuous spectrum, and we have to explain what we mean by ``counting the spectrum''. The spectral theory of manifolds with cusps was developed in a series of papers \cite{Lax-Phillips-Automorphic-76,CdV-81, CdV-83,Muller-83, Muller-86, Muller-92}. Its conclusions are the following. The Laplacian has both continuous and point spectrum. The point spectrum consists of a discrete set $\sigma_{pp}$ of real numbers $\mu_0=0 < \mu_1 < \dots < \mu_n < \dots$ associated to eigenfunctions $u_0,\dots, u_n,\dots$. This set may be finite or infinite, and each eigenvalue has finite multiplicity. The continuous spectrum has multiplicity $\kappa$, the number of cusps. 

We actually have a decomposition of the spectral measure associated to the continuous spectrum. To each cusp $Z_\ell$ is associated a meromorphic family of generalized eigenfunctions $E_\ell(s)$ called the \emph{Eisenstein functions}, so that
\begin{align*}
(-\Delta - s(d-s))E_\ell(s) &=0, \\
E_\ell(s,x) &= \mathbb{1}\{x\in Z_\ell\} y_\ell^s + \widetilde{E}_\ell(s,x),
\end{align*}
where $\widetilde{E}_\ell(s)\in L^2(M)$ when $\Re s > d/2$. Given $f\in C^\infty_c(M)$, we have the integral representation (see \cite[eq. 7.36]{Muller-83})
\begin{equation}\label{eq:decomposition-L^2}
f = \sum_{\mu_n} \langle f , u_n \rangle u_n + \frac{1}{4\pi} \sum_{\ell=1}^\kappa \int_\R \left\langle f, E_\ell\left( \frac{d}{2} + i \lambda \right) \right\rangle E_\ell\left( \frac{d}{2} + i \lambda \right) d\lambda.
\end{equation}
In cusp $Z_j$, the zeroth Fourier mode of $E_\ell$ takes the form
\[
\delta_{\ell j} y_j^s + \phi_{\ell j}(s) y_j^{d-s},
\]
where $\phi_{\ell j}$ is a meromorphic function. Gathering the $\phi$'s in a matrix $\phi(s)$, and taking the determinant, we obtain the \emph{scattering determinant} $\varphi(s)$. Since the family of Eisenstein function is unique, one can prove that for $s\in \C$
\begin{equation}\label{eq:symmetry-varphi}
\varphi(d-s)\varphi(s) = 1.
\end{equation}
We also have $|\varphi(s)| = 1$ when $\Re s = d/2$. Writing $\varphi(d/2 + i \lambda) = e^{2 i \pi \Psi(\lambda)}$, we obtain a real analytic function $\Psi$ which is called the \emph{scattering phase}.

The resolvent
\begin{equation*}
\mathcal{R}(s) = (-\Delta - s(d-s))^{-1}
\end{equation*}
extends from $\Re s > d/2$ to $\C$ as a meromorphic family of operators $C^\infty_c \to C^\infty$. The poles are in $\{\Re s \leq d/2 \} \cup [d/2, d]$. They form the so-called \emph{resonant set} $\Res(M,g)$. If $d^2/4 + r_n^2$ is a discrete eigenvalue $\mu_n$ and $\Im r_n < 0$, then $d/2 + i r_n$ is in the resonant set. The other elements of the resonant set are the poles of the family of Eisenstein functions, or equivalently the poles of $\varphi$. The poles of $E_i$ that are in $(d/2,d]$ have to be of the form $d/2 + ir_n$.

There will two different meaning to ``counting the spectrum''. On the one hand, we will estimate the mass of the spectral measure by approximating
\begin{equation}\label{eq:def-tilde-N}
\widetilde{N}(\lambda):= N_{pp}(\lambda)- \Psi(\lambda),
\end{equation}
where
\begin{equation}\label{eq:def-N_d}
N_{pp}(\lambda):= \#\{ \mu \in \sigma_{pp}\ | \ \mu \leq d^2/4 + \lambda^2\}.
\end{equation}
On the other hand, we will also be counting the resonant set.

\subsection{Results} 

Our first result is more of a remark:
\begin{theorem}\label{thm:singularities-trace}
Let $M$ be a manifold with $\kappa$ cusps. Then the Fourier transform of $\widetilde{N}'(\lambda)$ is a tempered distribution, whose singular support is contained in the set of algebraic lengths of periodic geodesics on $S^\ast M$.
\end{theorem}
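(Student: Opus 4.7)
The plan is to interpret $\widetilde{N}'$ as the spectral side of a regularized wave trace and then read off its singular support by propagation of singularities. First, for every Schwartz function $h$ with $\hat h$ compactly supported, I would show
\[
\int_\R h(T)\, \widetilde{N}'(T)\, dT = \sum_j h(r_j) - \int_\R h(T)\, S'(T)\, dT,
\]
where $\lambda_j = d^2/4 + r_j^2$ enumerates the discrete spectrum, and then identify the right-hand side, via a Birman--Krein type identity, with $\int_\R \hat h(t)\, \OTr[\cos(tQ)]\,dt$ for $Q = \sqrt{-\Delta - d^2/4}$. Proving that the $-S'$ term is exactly what one must subtract to make the point-spectrum trace a well-defined 0-trace is the technical input here; it is the analogue of the Selberg/M\"uller trace formula in the present non-constant curvature setting, and should be derived from the asymptotics of the Eisenstein functions at the cusps.

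Second, I would analyze the singularities of $t \mapsto \OTr[\cos(tQ)]$. On the compact part $M_0$, the Duistermaat--Guillemin machinery shows that $\Tr[\chi \cos(tQ)]$, for a cutoff $\chi$ supported in $M_0$, has singular support contained in $\{0\} \cup \{\pm \ell(\gamma)\}$, where $\gamma$ ranges over closed geodesics meeting $\supp \chi$. In the cusps, separation of variables on the torus factor in \eqref{eq:def-cusp} gives an explicit representation of the wave kernel: the non-zero Fourier modes contribute a trace-class piece that is smooth away from $t = 0$, while the zero-mode piece is precisely what the 0-regularization removes. Because no complete geodesic stays in a cusp forever --- any trajectory entering a cusp either returns to $M_0$ or escapes asymptotically to $y = +\infty$ --- every periodic orbit crosses $M_0$, so for $t \neq 0$ the singularities of the wave 0-trace come only from closed geodesics in $M$.

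The main obstacle is controlling the boundary terms produced by the 0-regularization as the truncation height $Y \to \infty$: one needs to verify that they depend smoothly on $t$ on every compact interval. The exact hyperbolic structure of the cusps reduces this to an ODE computation for the zero Fourier mode in the variable $y$, yielding a closed-form expression whose $t$-singularities lie only at $t = 0$. Combined with the previous step, this places the singular support of $\widehat{\widetilde{N}'}$ inside $\{0\} \cup \{\pm \ell\}$, with $\ell$ ranging over the length spectrum; the reverse inclusion follows from the non-vanishing of the Duistermaat--Guillemin principal symbols attached to each closed geodesic.
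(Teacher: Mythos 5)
Your overall plan matches the paper's: reduce to the $0$-trace of $\cos(tQ)$ via a trace formula, apply Duistermaat--Guillemin on the compact part, and argue that the cusp contributions only produce a singularity at $t=0$. Where you diverge is in the cusp analysis. The paper does \emph{not} separate variables into Fourier modes on the torus; instead it exploits the fact that the curvature is exactly $-1$ in the cusp, so the Hadamard parametrix of \S\ref{sec:Hadamard} is \emph{exact} there, and the cusp contribution to the $0$-trace is given by the explicit sum over $\gamma\in\Lambda_i$ already computed in \S\ref{sec:cusp-contribution}. Finite speed of propagation is what localizes the $y$-range and makes that computation exhaustive. Your Fourier-mode decomposition is essentially the Poisson-dual picture (lattice sum $\leftrightarrow$ dual lattice modes) and should work, and your observation that the non-zero modes give a trace-class, smooth-away-from-zero term (thanks to the $|k|^2 y^2$ barrier) is the analogue; either route requires, as you note, controlling the $Y\to\infty$ boundary terms. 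One thing you gloss over that the paper is explicit about: to run the Duistermaat--Guillemin FIO argument on a non-compact manifold one needs to know that $Q=\sqrt{-\Delta-d^2/4}$ is pseudodifferential in an appropriate cusp calculus (the paper cites \cite{Bonthonneau-2} for this); simply restricting to a cutoff $\chi$ supported in $M_0$ is not quite enough because $Q$ is a global operator, so you should either invoke such a calculus or explicitly use finite speed of propagation of $\cos(tQ)$ to replace $M$ by a compactification that agrees on a $|t|$-neighbourhood of $\supp\chi$. Also, ``the zero-mode piece is precisely what the $0$-regularization removes'' is slightly off: the regularization subtracts the divergent $\kappa\tau$ term of the truncated trace, not the full zero-mode contribution, and a finite zero-mode remnant survives and feeds into the $t=0$ singularity (this is where the $-\frac{\kappa}{\pi}T\log T$ and $\frac{\kappa(1-\log 2)}{\pi}T$ terms of Theorem~\ref{thm:Phase} come from). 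Your remarks about the reverse inclusion via non-vanishing principal symbols are at the same level of rigor as the paper's.
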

This theorem is just a direct extension to manifolds with cusps of the result of Chazarain \cite{Chazarain-74} for compact manifolds. It hints at the fact that assumptions on the dynamics of the geodesic flow will have a consequence on $\widetilde{N}$. 

Recall that a manifold without conjugate points is a manifold for which the exponential map is a local diffeomorphism everywhere. Manifolds of non-positive curvature do not have conjugate points. However, examples of (compact) manifolds without conjugate points but with patches of positive curvature were exhibited in the papers \cite{Gulliver-75,Ballmann-Brin-Burns-87}; it should be possible to adapt that the constructions therein to create examples on manifolds with cusps. An aperiodic manifold is a manifold for which the set of closed geodesics is a measure-zero set in the unit cotangent bundle. Regarding estimates on $\widetilde{N}(\lambda)$ under different assumptions on the manifold $M$, we get
\begin{theorem}\label{thm:Phase}
Let $(M,g)$ be a manifold with $\kappa$ cusps, of dimension $d+1$. In decreasing order of generality,
\begin{itemize}
	\item Without further assumption on $M$,
\begin{equation}\label{eq:continuous-estimate-general-K}
\widetilde{N}(\lambda) = \frac{\vol B^\ast M}{(2\pi)^{d+1}} \lambda^{d+1} - \frac{\kappa}{\pi} \lambda \log \lambda  + \mathcal{O}( \lambda^d ).
\end{equation}
	\item If one assumes that $M$ is aperiodic then
\begin{equation}\label{eq:continuous-estimate-periodic-zero}
\widetilde{N}(\lambda) = \frac{\vol B^\ast M}{(2\pi)^{d+1}} \lambda^{d+1} - \frac{\kappa}{\pi} \lambda \log \lambda + \frac{\kappa(1-\log 2)}{\pi} \lambda + o( \lambda^d ).
\end{equation}
	\item Finally, if $M$ has no conjugate points, 
\begin{equation}\label{eq:continuous-estimate-negative-K}
\widetilde{N}(\lambda)= \frac{\vol B^\ast M}{(2\pi)^{d+1}} \lambda^{d+1} - \frac{\kappa}{\pi} \lambda \log \lambda + \frac{\kappa(1-\log 2)}{\pi} \lambda + \mathcal{O}\left( \frac{\lambda^d}{\log \lambda} \right).
\end{equation}
\end{itemize}
Observe that the lower order terms are bigger than the remainder only when $d=1$ (surfaces).
\end{theorem}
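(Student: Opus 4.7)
The overall strategy is to express $\widetilde{N}(T)$ via a regularized trace formula on the cusp manifold and then apply three successively sharper Tauberian inputs. The first step is to set up a Birman--Krein type identity showing that, for a test function $\chi$ with compactly supported Fourier transform,
\begin{equation*}
\int \chi(T - \tau)\, d\widetilde{N}(\tau) \;=\; \OTr\Bigl( \hat{\chi}(t)\, \cos\bigl(t\sqrt{-\Delta - d^2/4}\,\bigr) \Bigr),
\end{equation*}
where the $0$-regularized trace $\OTr$ absorbs the divergent diagonal integral over the cusps. The point of the definition $\widetilde{N} = N_{pp} - S$ is precisely that the scattering phase absorbs the continuous spectrum contribution into the spectral side of this identity, so that $\widetilde{N}$ is the natural counting function from the trace viewpoint. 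Theorem~\ref{thm:singularities-trace} is just the Chazarain counterpart recording the singular support of the resulting distribution in $t$.

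Once the identity is set, the leading asymptotics come from the $t=0$ singularity of the wave kernel. On the compact part $M_0$ a standard Hörmander parametrix for the half-wave propagator yields the local Weyl term producing $\vol(B^\ast M)/(2\pi)^{d+1}\, T^{d+1}$. Each cusp $Z_i$ contributes separately: the Abelian $\R^{d+1}/\Lambda_i$ fiber lets one expand the cusp propagator in Fourier modes on the torus, reducing the cusp trace to an explicit one-variable computation in $y$. After $0$-regularization, this produces exactly the logarithmic $-(\kappa/\pi) T \log T$ contribution, coming from the $dy/y$ divergence at the cusp tip, together with the constant $(1-\log 2)\kappa/\pi$ entering the subleading term in (\ref{eq:continuous-estimate-periodic-zero}) and (\ref{eq:continuous-estimate-negative-K}).

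The three remainder bounds then follow from corresponding Tauberian arguments applied to $\widetilde{N}'(T)$. First, Hörmander's Tauberian theorem combined only with the short-time wave-trace analysis gives the unconditional $\mathcal{O}(T^d)$ bound. Second, under the measure-zero assumption on closed geodesics, a Duistermaat--Guillemin argument shows that, for each bounded time window away from $0$, the wave trace has vanishing averages, upgrading the remainder to $o(T^d)$. Third, in negative curvature (or in dimension $2$ without conjugate points), one follows Bérard's strategy: lifting to the universal cover of $M_0$ and using the exponential divergence of Jacobi fields to count geodesic segments, the cutoff time in $\hat{\chi}(t)$ can be pushed up to $c\log T$, which through the Tauberian mechanism yields the $\mathcal{O}(T^d/\log T)$ remainder.

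The main obstacle I anticipate is controlling the interaction of the regularized wave propagator with the cusps, particularly in the Bérard case. For logarithmic cutoffs one must ensure that geodesic excursions into the cusps do not destroy the volume-of-tubes estimates: the key input is that the entrance time back into $M_0$ from height $y$ is $\mathcal{O}(\log y)$ and that the flow on the non-wandering set is Anosov, so Bérard's counting can be transplanted, but one has to check uniformity as $y\to\infty$ and confirm that the $\OTr$ regularization genuinely commutes with the Tauberian manipulations. The second delicate point, more technical than conceptual, is the explicit identification of the constant $(1-\log 2)\kappa/\pi$ from the cusp contribution, which requires carefully matching the finite part in $y$ with the subleading term in the short-time expansion.
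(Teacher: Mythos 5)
Your outline correctly identifies the shape of the argument: a $0$-regularized trace formula for $\widetilde N$, a short-time wave parametrix for the Weyl term, and the three standard Tauberian inputs (Hörmander for $\mathcal{O}(T^d)$, Duistermaat--Guillemin for $o(T^d)$, and a Bérard-style logarithmic propagation time for $\mathcal{O}(T^d/\log T)$). But the decomposition you propose for the wave kernel is genuinely different from the paper's, and this difference is precisely where the difficulty sits in the third case.

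You split $M = M_0 \cup (\cup_i Z_i)$ and propose a standard (Euclidean-modeled) Hörmander parametrix on $M_0$, matched to an exact Fourier-mode computation in the cusps. The paper instead constructs a \emph{single} Hadamard parametrix on the universal cover $\widetilde M$ of all of $M$, deliberately modeled on the hyperbolic (not Euclidean) metric, and then organizes the trace as a sum over $\gamma\in\Gamma$. The Weyl term is the $\gamma=1$ contribution; the $-\frac{\kappa}{\pi}T\log T$ and $\frac{\kappa(1-\log 2)}{\pi}T$ terms are read off from the parabolic elements $\gamma\in\Lambda_i$ after the $0$-regularization; and the remaining $\gamma$ give the error. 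There are two advantages of this route that your proposal does not reproduce. First, because the parametrix is modeled on the hyperbolic space, the remainder operator $R_N(t)$ vanishes identically at large $y$ (the cusp is genuinely hyperbolic there), and one can prove it is trace class with $\Tr R_N(t) = \mathcal{O}(t)e^{\mathcal{O}(|t|)}$ --- the paper explicitly notes that with the usual Euclidean Hadamard parametrix this trace-class control is not obvious, which is exactly the trap your proposal walks into. Second, the universal cover of $M$ is the right object on which to run Bérard's Jacobi-field estimates for the coefficients $u_k$; your phrase ``lifting to the universal cover of $M_0$'' is problematic since $M_0$ has boundary and the geodesics relevant for long times repeatedly cross that boundary, so there is no clean intrinsic object on which to perform Bérard's counting.

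For the first and second estimates your decomposition is probably workable, since only a fixed time window is needed and the gluing region between $M_0$ and the cusps is bounded. The genuine gap is in the third estimate: the gluing region has size $\asymp \log T$, and you would need to control the interaction of the two separate parametrices over this growing region and establish trace-class and uniform $L^1$ bounds there. You correctly flag this as the main obstacle, but you offer no mechanism to resolve it. The paper resolves it precisely by abandoning the $M_0$-versus-cusp split in favor of the global hyperbolic-modeled parametrix on $\widetilde M$ and the estimate $\Delta^l_{x'}u_k = \mathcal{O}(1)e^{\mathcal{O}(r)}$.
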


Selberg proved the last result for \emph{constant} curvature \emph{surfaces} (see (0.2) in \cite{Selberg-2}). Then M\"uller identified the leading term in the variable curvature case in all dimensions (proposition 4.13 in\cite{Muller-86}). For \emph{surfaces} again, Parnovski \cite{Parnovski-95} obtained the first and second estimate (theorem 1.1). Finally the first result was obtained in dimension $n>2$ by Christiansen \cite{Christiansen-01} for a wider class of cuspidal ends; we still include a proof as it is more specific to \emph{hyperbolic cusps}, and also a mere remark on the proof of the third result. The last estimate is the equivalent for cusp manifolds of the main result of B\'erard \cite{Berard-77} for \emph{compact} manifold.

Selberg proved that for constant curvature surfaces, the resonances are contained in a vertical strip near $\{\Re s = 1/2 \}$. The purpose of \cite{Bonthonneau-3} was to extend that result to a set of negatively curved metrics $\mathfrak{G}(M)$, in the following sense. For every metric $g\in \mathfrak{G}(M)$, there is $\eta(g) > d/2$ such that for some $C>0$ and all $\epsilon>0$, we have (see lemma \ref{lemma:resonance-free-zone}):
\begin{equation*}
\# \{ s\in \Res(M,g)\ |\ \Re s < d- \eta(g) - \epsilon ,\ |\Im s| \geq e^{- C \Re s} \} \text{ is finite.}
\end{equation*}
We will explain in section \ref{sec:Counting} how this set of metrics is characterized. Suffice it to say for now that it contains the whole set of negatively curved cusp metrics when there is only one cusp, and it is always a $C^2$-open and $C^\infty$-dense set of metrics. It contains the constant curvature metrics. We conjecture that all negatively curved metrics are in $\mathfrak{G}(M)$.

Each metric $g\in \mathfrak{G}(M)$ comes with two additional constants $a^0_\ast(g)$, $\ell_\ast(g)$, that have an interpretation in terms of \emph{scattered geodesics}, see \cite{Guillemin,Bonthonneau-3}. We obtain
\begin{theorem}\label{thm:Resonances-negative}
Assume $g\in \mathfrak{G}(M)$, and take $\eta > \eta(g)$. Then we have the following estimate. First, for the resonances away from the vertical strip
\begin{equation}\label{eq:counting-out-strip-global}
\#\{ s\in \Res(M,g)\ |\ \Re s < d-\eta,\ |s| \leq \lambda \} = \mathcal{O}(\lambda).
\end{equation}
Actually, we have the more precise estimate: for $0< \tilde{\lambda}<\lambda/2$,
\begin{equation}\label{eq:counting-out-strip-local}
\#\{ s\in \Res(M,g)\ |\ \Re s < d-\eta,\ |s-d/2 - i\lambda| \leq \tilde{\lambda} \} = \mathcal{O}(\tilde{\lambda} + \log \lambda).
\end{equation}
Now, for the resonances in the strip,
\begin{equation}\label{eq:Mangoldt-estimate}
\sum_{\substack{\Re s \geq d-\eta,\\ 0<\Im s \leq \lambda}} d-2\Re s = \frac{\kappa d}{4\pi} \lambda\log \lambda - \frac{\lambda}{2\pi}\left( \frac{\kappa d}{2} + \log |a^0_\ast(g)| - \frac{d}{2}\ell_\ast(g) \right) + \mathcal{O}(\log \lambda).
\end{equation}
And the main estimate,
\begin{equation*}
\begin{split}
\# \{ s& \in \Res(M,g)\ |\ d-\eta \leq \Re s \leq d/2,\ 0\leq \Im s \leq \lambda \} = \\
 &\frac{\vol(B^\ast M)}{(2\pi)^{d+1}} \lambda^{d+1}  -\frac{\kappa}{\pi} \lambda \log \lambda + \left(\kappa(1-\log 2) + \frac{\ell_\ast(g)}{2}\right) \frac{\lambda}{\pi} + \mathcal{O}\left( \frac{\lambda^d}{\log \lambda} \right).
\end{split}
\end{equation*}
\end{theorem}

For constant curvature surfaces, this is due to Selberg \cite{Selberg-2} --- see equations (1.8) and (2.4) therein --- and the $\mathcal{O}$ in equations \eqref{eq:counting-out-strip-global}, \eqref{eq:counting-out-strip-local} can be replaced by $0$. Selberg pointed out that the estimate \eqref{eq:Mangoldt-estimate} is a generalization of the Riemann-Von Mangoldt formula. Still for surfaces, but without assumptions, M\"uller observed \cite{Muller-92} what the leading term in the counting function should be, but there was a gap in the proof. Parnovski closed the gap \cite[Corollary 1.1.a)]{Parnovski-95} and proved a bound on the remainder of size $\mathcal{O}(\lambda^{3/2 + \epsilon})$. The remainder is actually $\mathcal{O}(\lambda^{3/2})$ as was shown in \cite{Bonthonneau-1}. This result can be generalized to any dimension, as the missing ingredient was the estimate \eqref{eq:continuous-estimate-general-K} in higher dimension
\begin{theorem}\label{thm:Resonances-general}
Let $M$ be a manifold with cusps of dimension $d+1>2$. Without further assumption,
\begin{equation*}
\#\{ s\in \Res(M,g)\ |\ |s-d/2| \leq \lambda \} = 2\frac{\vol(B^\ast M)}{(2\pi)^{d+1}} \lambda^{d+1} + \mathcal{O}(\lambda^{d}).
\end{equation*}
We also have the local estimate
\begin{equation*}
\#\{ s \in \Res(M,g)\ |\ |s-d/2 + i\lambda| = \mathcal{O}(1)\} = \mathcal{O}(\lambda^d).
\end{equation*}
Now, assume that $(M,g)$ is aperiodic. Then uniformly in $\epsilon>0$ as $\lambda \to \infty$,
\begin{equation*}
\#\{ s \in \Res(M,g)\ |\ |s-d/2 + i\lambda| \leq \epsilon \} = \mathcal{O}((\epsilon+o(1)) \lambda^d).
\end{equation*}
and
\begin{equation*}
\#\{ s\in \Res(M,g)\ |\ |s-d/2| \leq \lambda \} = 2\frac{\vol(B^\ast M)}{(2\pi)^{d+1}} \lambda^{d+1} + o(\lambda^{d}).
\end{equation*}
\end{theorem}

Section \ref{sec:weyl-phase} will be devoted to the proof of theorem \ref{thm:singularities-trace} and \ref{thm:Phase}. In section \ref{sec:Counting}, we will turn to the proof of theorem \ref{thm:Resonances-negative}. The proof of theorem \ref{thm:Resonances-general} will be found in the last section.

\textbf{Acknowledgement} This work started in Paris, and ended in Montr\'eal, hence funded by both Universit\'e d'Orsay and the CRM. I am happy to thank S. Zelditch for introducing me to the Hadamard parametrix. I am also thankful for suggestions of C. Guillarmou, D. Jakobson, F. Rochon and M. Zworski. I am also very grateful for the patience of the reviewers.

\section{A Weyl asymptotics for the scattering phase}\label{sec:weyl-phase}

As announced, in this section, we will give Weyl-type asymptotics for the scattering phase, for manifolds without conjugate points (the third part of theorem \ref{thm:Phase}, which was the original motivation for this paper). The main idea is to express the quantity we seek to evaluate as some integral involving the trace of the wave group. Then we use the Hadamard parametrix for the wave kernel to obtain an expansion. 

The reason we have a $\log\lambda$ improvement is that in terms of FIO's, the wave group $e^{it \sqrt{-\Delta}}$ does not develop caustics for anytime, so microlocal techniques can be used for times up to $\mathcal{O}(\log \lambda)$, instead of $\mathcal{O}(1)$ in the general case.

In the case of compact manifolds, the trace of the wave group is a well defined distribution. However, for our non-compact manifolds, we have to change the definition of the trace and replace it by a $0$-trace. This is explained in \ref{sec:preliminaries}, where we prepare the stage for the rest of the proof. In the next section \ref{sec:Hadamard}, we recall the construction of the Hadamard parametrix on the universal cover of the manifold. Then, in \ref{sec:contributions-trace}, we evaluate contributions from different elements of the fundamental group. Then comes the conclusion \ref{sec:conclusion-phase}.

As a corollary of the proof, we obtain a Weyl law for general cusp manifolds (without assumption of curvature) by inspecting only the singularity at $0$ of the wave-trace. At last, we sketch proofs for theorem \ref{thm:singularities-trace} and part 2 of theorem \ref{thm:Phase}.

\subsection{The 0-trace}
\label{sec:preliminaries}

We want to define a replacement for the trace of the wave group, and relate it to the spectral quantities. Let $\Pi_0$ be the  $L^2$ orthogonal projector on functions supported in $\{y> y_0\}$, and not depending on $\theta$.

Consider $\tau$ such that $e^\tau > y_0$, and $f$ a Schwartz class function. The operators
\begin{equation*}
(\mathbb{1}-\mathbb{1}_{y>e^{\tau}}\Pi_0) f(-\Delta) \text{ and }\mathbb{1}_{y<e^{\tau}}f(-\Delta)
\end{equation*}
are both trace class because the inclusion $(\mathbb{1}-\mathbb{1}_{y>e^{\tau}}\Pi_0) H^N(M)\hookrightarrow L^2(M)$ is trace class for $N>d+1$ --- see proposition 2.3 combined with proposition 1.23 in \cite{Bonthonneau-2} for example.

\begin{lemma}[Trace formula]
Let $\psi \in C^\infty_c(\R)$ be real and even. Then
\begin{equation}\label{eq:0-trace-psi}
\begin{split}
\lim_{\tau\to+\infty}& \left[\Tr \left\{ (\mathbb{1}-\mathbb{1}_{y>e^{\tau}}\Pi_0) \widehat{\psi}\left( \sqrt{-\Delta - \frac{d^2}{4}}\right)\right\} - \kappa \tau \psi(0)\right] \\
&= \sum_{\mu\in \sigma_{pp}} \widehat{\psi}(\sqrt{\mu - d^2/4}) - \frac{1}{2} \int \Psi'(\lambda) \widehat{\psi}(\lambda)d\lambda + \frac{1}{4} \widehat{\psi}(0) \Tr \phi(d/2).
\end{split}
\end{equation}
The quantity in the LHS is denoted by $\OTr \widehat{\psi}\left( \sqrt{-\Delta - \frac{d^2}{4}}\right)$. The limit also exists when one replaces $\mathbb{1}_{y>e^{\tau}}\Pi_0$ by $\mathbb{1}_{y> e^\tau}$, and it is the same.
\end{lemma}

This is a statement similar to equation (2.2) in \cite{Muller-92}. For a more general exposition of the concept of $0$-trace we refer to \cite{Guillope-Zworski}.
\begin{proof}
The first observation is that the operator $\widehat{\psi}(\sqrt{-\Delta- d^2/4})$ is well defined by functional calculus, because $\widehat{\psi}$ is an even entire function. From \eqref{eq:decomposition-L^2}, we find that 
\begin{equation}\label{eq:trace-cutoff}
\begin{split}
\Tr\left\{ \mathbb{1}_{y<e^{\tau}} \widehat{\psi}\left( \sqrt{-\Delta - \frac{d^2}{4}}\right)\right\}& = \sum_{\mu} \widehat{\psi}(\sqrt{ \mu - d^2/4}) \int_{y\leq e^\tau} |u_\mu|^2\\
+ \frac{1}{4\pi} &\sum_{\ell} \int \widehat{\psi}(\lambda) \int_{y< e^\tau} |E_\ell(d/2 + i\lambda,x) |^2 dx d\lambda,
\end{split}
\end{equation}
provided the RHS makes sense, by linearity of the trace. Since we have a Weyl upper bound on the number of eigenvalues (see theorems 3.11 and 4.1 in \cite{Muller-86}), the first term in the RHS is a certainly a converging sum, which converges as $\tau \to + \infty$ to 
\begin{equation*}
\sum_{\mu} \widehat{\psi}(\sqrt{ \mu - d^2/4}).
\end{equation*}
Since this also holds replacing $1- \mathbb{1}_{y>e^\tau}$ by $1- \mathbb{1}_{y> e^\tau} \Pi_0$, we can focus our attention on the contribution from the continuous spectrum. Now, we need to use the Maass-Selberg relations:
\begin{equation}\label{eq:Maass-Selberg-bis}
\begin{split}
\int_M &(\mathbb{1}-\mathbb{1}_{y>e^\tau}\Pi_0) \sum_\ell |E_\ell|^2  \\
&= 2 \kappa \tau - \frac{\varphi'}{\varphi}\left( \frac{d}{2} + i \lambda \right) + \Tr\frac{ e^{2i \tau \lambda} \phi^\ast(\frac{d}{2} + i\lambda) - e^{-2i \tau \lambda} \phi(\frac{d}{2} + i\lambda)}{2i\lambda}\ .
\end{split}
\end{equation}
This the content of \eqref{eq:Maass-Selberg-axis} from the appendix. Here the first term is a constant in the $\lambda$ variable, the second is $-2\pi \Psi'$ and the last one is a continuous bounded function. However, using the fact that $\Psi(\lambda) = \mathcal{O}(\lambda^{d+1})$ and equation \eqref{eq:decomp-increasing-phase} also from the appendix, we deduce that 
\begin{equation*}
\int_0^\lambda |\Psi'(\lambda')|d\lambda' = \mathcal{O}(\lambda^{d+1}).
\end{equation*}
It follows that the contribution from the continuous spectrum in the RHS of equation \eqref{eq:trace-cutoff} is a converging integral. The difference between the case $\mathbb{1}_{y<e^\tau}$ and $\mathbb{1}-\mathbb{1}_{y> e^\tau} \Pi_0$ is 
\begin{equation*}
\frac{1}{4\pi}\sum_\ell \int \int_{y>e^\tau} (1-\Pi_0)|E_\ell(d/2 + i\lambda,x)|^2 \widehat{\psi}(\lambda) dx d\lambda .
\end{equation*}
When $\tau \to +\infty$, we use dominated convergence to show that it goes to $0$.

Inspecting the several identities, the proof of the lemma will be complete if we can show that 
\begin{equation*}
\begin{split}
I(\tau) := \frac{1}{4\pi} \int & \widehat{\psi}(\lambda) \frac{e^{2i\tau \lambda} \Tr \phi^\ast(d/2 + i\lambda) - e^{-2 i \tau \lambda} \Tr \phi(d/2 + i\lambda)}{2i\lambda} d\lambda \\
	& \underset{\tau\to +\infty}{\longrightarrow} \frac{1}{4} \widehat{\psi}(0) \Tr \phi(d/2).
\end{split}
\end{equation*}
We compute
\begin{equation*}
\partial_\tau I = \frac{1}{4\pi} \int \widehat{\psi}(\lambda)\times\left( e^{2i\tau\lambda} \Tr \phi^\ast(d/2+i\lambda) + e^{-2i\tau \lambda}\Tr \phi(d/2 + i\lambda)\right)d\lambda.
\end{equation*}
Since $\phi^\ast(d/2 + i\lambda) = \phi(d/2 - i\lambda)$,
\begin{equation*}
\partial_\tau I = \frac{1}{2\pi} \psi \ast \widehat{\Tr \phi} (2 \tau).
\end{equation*}
(The Fourier transform is taken along the $\lambda$ parameter). In particular, this has fast decay when $\tau \to \pm \infty$, so $I$ has limits for $\tau \to \pm \infty$. But, as $\psi$ is real and even, $\widehat{\psi}$ also is. Hence, $I$ is real and $I(-\tau) = - \overline{I(\tau)}$, so 
\begin{equation*}
2 I(+\infty) = \int \partial_\tau I = \frac{1}{2} \widehat{\psi}(0) \Tr \phi(d/2).
\end{equation*}
\end{proof}

Let $K(t,x,x')$ be the Schwartz kernel of $\cos t \sqrt{-\Delta - d^2/4}$. From the considerations in the second half of p. 44 of \cite{Duistermaat-Guillemin-75} we deduce that the restriction to the diagonal $K(t,x,x)\in C^\infty(M_x, \mathcal{D}'(\R)_t)$. So we can reformulate the lemma above to
\begin{equation}\label{eq:Trace-formula-2}
\begin{split}
\sum_\lambda \widehat{\psi}\left(\sqrt{\lambda - \frac{d^2}{4}}\right)& - \frac{1}{2} \int \Psi'(\lambda) \widehat{\psi}(\lambda)d\lambda+ \frac{1}{4} \widehat{\psi}(0) \Tr \phi\left(\frac{d}{2}\right) = \\
&\lim_{\tau \to \infty} \int_{y \leq e^\tau} \int_\R \psi(t) K(t,x,x)dt dx  -\kappa \tau \psi(0).
\end{split}
\end{equation}
The density $dx$ here (and afterwards) is understood as the Riemannian volume form. Beware that in the cusps, it takes the form $y^{-d-1}d y d\theta$.

\subsection{The Hadamard parametrix}
\label{sec:Hadamard}

From now until section \ref{sec:no-assumption-curvature}, we assume that there are no conjugate points. We will build a parametrix for $K(t,x,x')$, first on the universal cover, and then we will come back to the manifold by summing over the fundamental group.

\subsubsection{Building the approximation on the universal cover}

B\'erard gave an elegant exposition of the Hadamard parametrix in \cite{Berard-77}. It was our intent to use it directly. However, doing this, a remainder appeared, and it was not obvious that it was trace class. To avoid such a discussion, we found it was simpler to use a version of the Hadamard parametrix tweaked to our needs. Instead of being modelled on the Euclidean space, it is modelled on the hyperbolic space. For lack of a reference, we go through its construction, which is very similar to the original parametrix.

We have to introduce a bit of notation. We follow B\'erard (or, if you will, \cite{Berger-Gauduchon-Mazet}). Let $\widetilde{K}(t,x,x')$ be the kernel of $\cos t \sqrt{-\Delta -d^2/4}$ on $\widetilde{M}$. If $x,x' \in \widetilde{M}$, $r=d(x,x')$ is the riemannian distance between them. Since the manifold has no conjugate points, the exponential map is a local diffeomorphism, and so it is bijective from any $T_x M$ to $\widetilde{M}$. In particular, we have exactly one geodesic between any two points on $\widetilde{M}$. We let 
\begin{equation*}
\Theta(x,x') = \det T_{\exp^{-1}x'} \exp_x.
\end{equation*}
(This is the Jacobian of the exponential map.) For fixed $x$, $\partial/\partial r$ is the unit vector field along geodesics from $x$, and $\Theta'= \partial_r \Theta$. From the proposition G.V.3 in \cite{Berger-Gauduchon-Mazet}, we have
\begin{equation*}
\nabla\cdot\left(\frac{\partial}{\partial r}\right)= \Delta r = -\left( \frac{\Theta'}{\Theta} + \frac{d}{r}\right). 
\end{equation*}
From this identity, one may compute $\Theta$ in the case of the real hyperbolic space $\Hh^{d+1}$. It is only a function of $r$, and we denote it by $\Theta_0(r)$. One finds that
\begin{equation*}
\frac{\Theta'_0}{\Theta_0} = d \left(\coth r - \frac{1}{r}\right),
\end{equation*}
so that coming back to $\widetilde{M}$,
\begin{equation}\label{eq:div-d/dr}
\nabla\cdot\left(\frac{\partial}{\partial r}\right) = -\left( \frac{\Theta'}{\Theta} - \frac{\Theta'_0}{\Theta_0} + \frac{d \cosh r}{\sinh r}\right). 
\end{equation}

We define a family of tempered distributions on $\R$ in the following way. If $s\in \R$, $s_+$ is its positive part. For $\alpha \in \C$,
\begin{equation*}
M_\alpha(s) = \begin{cases} \frac{s_+^\alpha}{\Gamma(\alpha+1)} \text{ if $\Re \alpha > - 1$} \\ M_{\alpha-1}(s) = M_\alpha'(s). \end{cases}
\end{equation*}
In particular, we have $s M_{\alpha - 1}(s) = \alpha M_{\alpha}(s)$. 

The wave equation propagates at speed $1$. That is to say that $\widetilde{K}(t,x,x')$ is supported for $d(x,x')\leq t$. Additionally, the singular part of the kernel is supported \emph{exactly} on $\{d(x,x') = t\}$ (see theorem 6.1 in \cite{Taylor-1}).

The point of the Hadamard parametrix is to expand the kernel of the wave operator in powers of $(t^2 - r^2)_+$. However, the first terms in the development have to be negative powers. To define them as distributions, we have to interpret $(t^2 -  r^2)_+^{-n}$ as $M_{-n}(t^2 -  r ^2)$, if $n \geq 0$. To be more precise, since $(r,t)\mapsto t^2 - r^2$ is not a submersion at $0$, and $M_{\alpha}$ has non-trivial wavefront set at $0$, $M_{\alpha}(t^2 - r^2)$ is not a well defined distribution; it is $|t| M_\alpha(t^2- r^2)$ that is well defined. Indeed, we set
\begin{equation*}
\int M_\alpha(t^2 - r^2)|t| f(t,x)dt dr := \int \frac{f(\sqrt{u},x) + f(-\sqrt{u},x)}{2}M_\alpha(u - r^2) du dr.
\end{equation*}
The key here is that when $f$ is a smooth function, $(f(\sqrt{u},x) + f(-\sqrt{u},x))/2$ also is. In the case of the real hyperbolic space, it is convenient to replace $t^2 -  r ^2$ by $\cosh t - \cosh  r $ --- they have a similar behaviour around $(0,0)$. That is, we are looking for an expression of the kernel in the form
\begin{equation*}
\widetilde{K}(t,x,x') = C_0 \sum_{k} \left( \frac{-1}{2}\right)^k f_k(x,x') \sinh |t| M_k(\cosh t - \cosh  r ),
\end{equation*}
where we are summing over $k_0 + \N$, $k_0 \in \R$. Obviously such an expansion cannot be converging, but let us do a formal computation. Let $\square = \partial_t^2 -\Delta_{x'} -d^2/4$, and $\square_0=\partial_t^2 - \Delta_{x'}$. For $t>0$,
\begin{equation*}
\begin{split}
\square \left\{ f_k \sinh t M_k \right\}& = \{(-\Delta - d^2/4)f_k \}\sinh t M_k \\
					&{}+ f_k \sinh t \square_0 M_k \\
					&{}+ f_k \sinh t M_k\\
					&{}+ 2 f_k \cosh t \partial_t M_k \\
					&{}- 2\sinh t \nabla f_k . \nabla M_k.
\end{split}
\end{equation*}
We compute each term in the RHS
\begin{align*}
\nabla M_k 		& = - \sinh r M_{k-1} \partial_r, \\
\partial_t M_k 	& = \sinh t M_{k-1},\\
\square_0 M_k 	& = \left[k(\cosh t + \cosh r) + \sinh r \left( \frac{\Theta'}{\Theta} - \frac{\Theta'_0}{\Theta_0}\right) + d \cosh r \right]M_{k-1}.
\end{align*}
Since we want to solve the equation $\square \widetilde{K} = 0$, we get the equation
\begin{equation*}
\begin{split}
& 0  = C_0 \sum \left(\frac{-1}{2}\right)^k \sinh|t|M_k \times \left[ \Big(-\Delta - d^2/4 + 1 + k(k+2)\Big)f_k \right.\\
	&\left. -\frac{1}{2}\left\{ f_{k+1}\left(2(k+2 + \frac{d}{2})\cosh r + \sinh r \left(\frac{\Theta'}{\Theta} - \frac{\Theta'_0}{\Theta_0}\right) \right) + 2 \sinh r\partial_r f_{k+1} \right\}\right].
\end{split}
\end{equation*}
We deduce that the coefficients must satisfy the relations
\begin{equation*}
\begin{split}
f_{k}\left( \left(k+1+\frac{d}{2}\right)\cosh r +\frac{1}{2}\sinh r\left( \frac{\Theta'}{\Theta}-  \frac{\Theta_0'}{\Theta_0}\right) \right)& + \sinh r\partial_r f_{k} =\\
& (-\Delta  + k^2 - d^2/4)f_{k-1}.
\end{split}
\end{equation*}
This is a nice family of transport equations. Now, we need to determine $k_0$. Since we require that the limit of $\widetilde{K}$ when $t \to 0$ is $\delta(x,x')$, by a dimensional analysis, the sum has to start at $k_0 = -d/2 - 1$. The corresponding $f_{k_0}$ is $(\Theta_0/\Theta)^{1/2}$, up to a constant. We let $u_k = f_{k_0 + k}$ for $k\in \N$.

The solution to the system is:
\begin{align*}
u_0				&= \sqrt{\frac{\Theta_0}{\Theta}} \\
\intertext{and for $k>0$} u_{k}			&= \sqrt{\frac{\Theta_0}{\Theta}} \int_0^r \frac{\sinh(s)^{k-1}}{\sinh (r)^k} \sqrt{\frac{\Theta}{\Theta_0}(s)} (-\Delta  + (k-1-d/2)^2- d^2/4) u_{k-1}(s) ds,
\end{align*}
where $s$ parametrises the geodesic between $x$ and $x'$, travelled at speed $1$. When the curvature around the geodesic from $x$ to $x'$ is constant equal to $-1$, $u_k$ vanishes at $(x,x')$ for $k \geq 1$, and $u_0 = 1$. 

\begin{defprop}\label{defprop:C^n-norms}
Let $(M',g')$ be a Riemannian manifold. For $k\in \N$, and $f\in C^k(M')$, we let
\begin{equation*}
\| f \|_{\mathscr{C}^k(M')} := \sup_{n\leq k} \| \nabla^n f \|_{L^\infty(M')}.
\end{equation*}
If $M' = Z$ is a cusp, $\|f\|_{\mathscr{C}^k(Z)}$ is equivalent to 
\begin{equation*}
\| f \|_{\mathscr{C}^k(Z)}' := \sup_{\alpha + |\beta| \leq k} \| (y\partial_y)^\alpha(y\partial_\theta)^\beta f\|_{L^\infty(Z)}.
\end{equation*}
\end{defprop}

For a proof, we refer to appendix A in \cite{Bonthonneau-2}. The following lemma is crucial.
\begin{lemma}\label{lemma:eta-1ound-u_k}
For all $k,l\geq 0$, $\| u_k \|_{\mathscr{C}^l(d(x,x')<r)} = \mathcal{O}(1) e^{\mathcal{O}( r )}$.
\end{lemma}

\begin{proof}
The lemma is based on the fact that solutions to linear ODE's with bounded coefficients grow at most exponentially fast.

Given a function $f$ on $\widetilde{M}\times\widetilde{M}$, we will say that it is \emph{tame} if it satisfies a similar set of estimates: $\|f\|_{\mathscr{C}^l(d(x,x')<r)} = \mathcal{O}(1) e^{\mathcal{O}( r )}$ for all $l\geq 0$. The set of tame functions is an algebra. It is also stable by $\Delta_{x'}$ --- since $\Delta = \Tr \nabla^2$.
\begin{lemma}\label{lemma:r-is-tame}
The distance function $r^2=d(x,x')^2$ is tame.
\end{lemma}

\begin{proof}
This follows from the fact that the curvature tensor $\mathscr{R}$ and all its covariant derivatives are bounded on $\widetilde{M}$. 

Consider the matrix Jacobi field $\mathbb{A}_x$ along the geodesic between $x$ and $x'$, that vanishes at $x$, and such that $\mathbb{A}_x'(x)=\mathbb{1}$. Then $\nabla_{x'} \nabla_{x'} r = \mathbb{A}_x'\mathbb{A}_x^{-1}(x')$. According to lemma 2.8 in \cite{Eberlein-73}, this is bounded by $k\coth k r$ for $k > \| \mathscr{R} \|_{L^\infty}^{1/2}$. Now, consider the matrix Jacobi field $\mathbb{B}$ such that $\mathbb{B}(x')=0$ and $\mathbb{B}(x)=\mathbb{1}$. Then $\nabla_x \nabla_{x'} r = \mathbb{B}'(x')$. This is also $\mathbb{A}_{x'}^{-1}(x)$. As one can see from the arguments in page 1299 in \cite{Bonthonneau-6}, this is $\mathcal{O}(\langle 1/r \rangle)$. 

To produce the desired estimates for the derivatives of order $n>2$ it suffices to control $\nabla^{n-2}_{x,x'} (\mathbb{A}_{x}(x'))$ and $\nabla^{n-2}_{x,x'} (\mathbb{A}_{x}'(x'))$. It boils down to computing the variation of solutions to a second order linear ODE when initial conditions are constant and the coefficients of the equation vary. The equation for $X=\nabla^{n-2}_{x,x'} (\mathbb{A}_{x}(x'))$ can be put in the form
\[
X'' + K X = P(\mathbb{A}, \nabla \mathbb{A},\dots,\nabla^{n-3}\mathbb{A}, \mathscr{R}, \dots, \nabla^{n-2}\mathscr{R}),
\]
where $K$ involves the curvature tensor, and $P$ is a polynomial expression with universal constant coefficients. Using usual techniques of linear ODE's (variation of parameters), we can proceed by induction and finish the proof.
\end{proof}

As a consequence, if $f\in C^\infty(\R)$ is even and $\| f \|_{C^n(|x|\leq r)} = \mathcal{O}(1) e^{\mathcal{O}( r )}$, then $f(r)$ is tame. In particular, $\Theta_0$ is tame. It was the main technical result in \cite{Bonthonneau-6} (lemma 3) that 
\[
\frac{1}{\sqrt{\Theta}} = \mathcal{O}( 1 + r^{d/2}).
\]
Since $\Theta$ can be expressed as $r^{-d}\det \mathbb{A}_x$ where $\mathbb{A}_x$ is the matrix-Jacobi field along the geodesic between $x$ and $x'$ introduced above, we deduce that very much like the case of $r$, the covariant derivatives of $\Theta$ will be solutions of linear second order ODE's with bounded coefficients, and tame forcing terms. As a consequence $\Theta$ is tame, and so is $u_0$. 

To conclude, proceed by induction and assume that $u_k$ is tame for some $k\geq 0$. Then inspect the formula for $u_{k+1}$. It is an integral expression involving only tame functions. Combine the algebraic stability of tame functions with differentiation under the integral sign to deduce that $u_{k+1}$ is tame.
\end{proof}

As the sum does not converge, we define the cut-off sums
\begin{equation*}
\widetilde{K}_N(t,x,x') := C_0 \sum_{k=0}^N \left( \frac{-1}{2}\right)^k u_k(x,x') \sinh|t| M_{k-(d+2)/2}(\cosh t - \cosh  r ).
\end{equation*}

\begin{lemma}
There is a constant $C_0$ depending only on the dimension, so that for $N\geq 0$, $x\in \widetilde{M}$, for all $\psi \in C^\infty_c(\widetilde{M})$, as $t\to 0$,
\begin{equation*}
\int \widetilde{K}_N(t,x,x')\psi(x')dx' \to \psi(x) \text{ and } \int \partial_t\widetilde{K}_N(t,x,x')\psi(x')dx' \to 0.
\end{equation*}
Additionally, for $N > (d+2)/2$,
\begin{equation}\label{eq:remainder-1}
\begin{split}
\square \widetilde{K}_N = C_0 &\left( \frac{-1}{2} \right)^N \left[(-\Delta +(N-d/2)^2 - d^2/4)u_N  \right] \\
&\times \sinh|t| \left(\cosh t - \cosh  r \right)^{N-(d+2)/2}_+.
\end{split}
\end{equation}
\end{lemma}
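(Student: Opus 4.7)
The plan is to treat the two claims in turn. The formula for $\square\widetilde{K}_N$ is a bookkeeping exercise on top of the formal computation carried out just above the lemma statement, while the initial-condition claim is proved by a scaling argument reducing it to the classical Euclidean Hadamard construction.

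For the formula on $\square\widetilde{K}_N$, the strategy is simply to rerun the formal calculation above the statement but with the sum truncated at $N$. That computation shows that the coefficient of $M_{k-(d+2)/2}$ in $\square\widetilde{K}_N$ depends on $u_k$ and $u_{k+1}$, and that these contributions cancel thanks to the transport equation
\begin{equation*}
u_{k+1}\Bigl((k+1)\cosh r + \tfrac{1}{2}\sinh r\Bigl(\tfrac{\Theta'}{\Theta} - \tfrac{\Theta'_0}{\Theta_0}\Bigr)\Bigr) + \sinh r\,\partial_r u_{k+1} = \bigl(-\Delta + (k-d/2)^2 - d^2/4\bigr)u_{k}.
\end{equation*}
By construction of the $u_k$, this cancellation happens for every $0\leq k\leq N-1$. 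At the upper endpoint $k=N$ the $u_{N+1}$ contribution is absent, so only the piece $(-\Delta + (N-d/2)^2 - d^2/4)u_N$ survives; this is exactly the would-be right-hand side of the transport equation at level $N+1$. Under the hypothesis $N>(d+2)/2$ the distribution $M_{N-(d+2)/2}(\cosh t - \cosh r)$ coincides with the continuous function $(\cosh t-\cosh r)_+^{N-(d+2)/2}/\Gamma(N-(d+2)/2+1)$; absorbing this $\Gamma$-factor together with the $(-1/2)^N$ sign into the constant yields the stated form.

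For the initial conditions, I pass to geodesic polar coordinates $x'=\exp_x(r\omega)$ around $x$, so $dx'=\Theta(x,x')\,r^d\,dr\,d\omega$, and rescale $r=ts$; since $\widetilde{K}_N(t,x,\cdot)$ is supported in $\{r\leq t\}$, this maps the support to the unit ball. In this scaling
\begin{equation*}
\cosh t - \cosh r = \tfrac{t^2}{2}(1-s^2) + O(t^4),\qquad \sinh|t|=t+O(t^3),\qquad \Theta(x,ts\omega)=1+O(t^2s^2),
\end{equation*}
so each summand of $\int\widetilde{K}_N(t,x,x')\psi(x')\,dx'$ reduces in the limit to an explicit Euclidean integral on the unit ball, times a universal dimension-only factor. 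The $k=0$ summand uses $u_0=\sqrt{\Theta_0/\Theta}$ and produces, as $t\to 0$, $C_0$ times a nonzero dimensional constant times $\psi(x)$; choosing $C_0$ to be the reciprocal of that constant gives the desired limit $\psi(x)$. The summands with $k\geq 1$ carry an extra factor $t^{2k}$ after rescaling and vanish in the limit thanks to Lemma~\ref{lemma:bound-u_k}, which gives the boundedness of $u_k$ and its derivatives near the diagonal. The claim for $\partial_t\widetilde{K}_N$ then follows from the same argument, noting that $\sinh|t| M_{k-(d+2)/2}(\cosh t - \cosh r)$ is even in $t$, so its time derivative is odd and $O(t)$ at the origin.

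\textbf{Main obstacle.} The only delicate point is making sense of $M_{-(d+2)/2}(\cosh t - \cosh r)$ as a distribution of negative order. As in the Euclidean Hadamard construction of B\'erard~\cite{Berard-77}, this is handled via analytic continuation in $\alpha$: the family $M_\alpha(\cosh t - \cosh r)$ is holomorphic in $\alpha$, so once the polar-coordinates integral has been set up it can be continued from $\Re\alpha$ large (where everything is a classical integral) down to the required value, and the rescaling $r=ts$ then produces the correct $\Gamma$-factors that identify the universal constant $C_0$.
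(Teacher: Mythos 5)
Your proposal is correct and follows essentially the same route as the paper, which itself just delegates: the remainder formula is read off from the formal computation above the lemma (via the transport equations and the disappearance of the boundary term $B_{k_0}$ thanks to $u_0=\sqrt{\Theta_0/\Theta}$), and the $t\to 0$ limits plus the value of $C_0$ are handled by the polar-coordinates rescaling argument that the paper credits to B\'erard (the paper pins $C_0$ instead by matching the constant-curvature case in Bunke--Olbrich, but this is the same computation). The only quibble is that the $\Gamma$-factor $1/\Gamma(N-d/2)$ cannot literally be absorbed into the dimension-only constant $C_0$ -- but this is a harmless sloppiness that the paper's own display \eqref{eq:remainder-1} shares.
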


The constant $C_0$ depending only on the dimension, we can compute its value in the case of constant $-1$ curvature. But that value can be found (for example) in \cite{Bunke-Olbrich-94}, page 360, in proposition 2.1. We get
\begin{equation*}
C_0 = \frac{1}{2}\frac{1}{(2 \pi)^{d/2}}.
\end{equation*}
The computations needed to check the limits at $t \to 0$ can be found in \cite{Berard-77} --- see proposition 27 therein. \qed

\subsubsection{Summing over the fundamental group}

Now, we denote by $\Gamma$ the fundamental group of $M$, and define for $N\geq 0$,
\begin{equation*}
K_{N}(t,x,x') = \sum_{\gamma \in \Gamma} \widetilde{K}_N(t,x,\gamma x').
\end{equation*}
For each $x,x'$, the number of non-vanishing terms is finite, so this is well defined. Since $K_N(t,\cdot)$ is bi-invariant by $\Gamma$, it defines a kernel on $M$, that we still denote by $K_{N}(t,\cdot)$. The associated operator on $C^\infty_c(M)$ is denoted by $A_{N}(t)$: for $f\in C^\infty_c(M)$,
\begin{equation*}
A_N(t) f (x) = \int_{M} K_N(t,x,x') f(x')dx'
\end{equation*}

\begin{lemma}\label{lemma:remainder-Hadamard}
For $N > 10 d $, $R_N(t) := A_N(t) - \cos t \sqrt{-\Delta - d^2/4}$ is a continuous family of trace class operators, with
\begin{equation}\label{eq:trace-estimate-R_N}
\Tr R_N(t) = \mathcal{O}(t) e^{\mathcal{O}(|t|)}.
\end{equation}
\end{lemma}

\begin{proof}
First, let $V_N(t) = \square R_N(t)$. Then we have
\begin{equation*}
R_N(t) = \int_0^t \frac{ \sin \left[ (t-s) \sqrt{-\Delta - d^2/4}\right]}{\sqrt{-\Delta - d^2/4}} V_N(s) ds.
\end{equation*}
Since $\sin (s \sqrt{-\Delta - d^2/4}) / \sqrt{-\Delta - d^2/4}$ is bounded with norm $\mathcal{O}(1) e^{\mathcal{O}(|s|)}$, it suffices to prove an estimate similar to \eqref{eq:trace-estimate-R_N} for $V_N(t)$. We need

\begin{lemma}\label{lemma:trace-norm}
Let $L(x,x')$ be the kernel of some operator on $L^2(M)$. Assume that
\begin{equation*}
\| L \|' := \sum_{|\alpha| \leq 2d + 3} \| y(x)^{d/2} y(x')^{d/2}\nabla^\alpha_{x,x'} L \|_{L^1(M \times M)} < \infty.
\end{equation*}
Then the corresponding operator is trace class, and its trace norm is controlled by $\| L \|'$. (Here, $y$ is a height function, corresponding with the usual function in the cusps, and being some positive constant in the compact part $M_0$).
\end{lemma}

We will give the proof of this fact later. For now,
\begin{equation*}
\|V_N\|'\leq \sum_{|\alpha |\leq 2d+3} \sum_{\gamma \in \Gamma} \int_{D \times D} |\nabla^\alpha_{x,x'}\square \widetilde{K}_N(t,x,\gamma x')| y(x)^{d/2} y(x')^{d/2} dx dx'.
\end{equation*}
where $D$ is a fundamental domain for the action of $\Gamma$ on $\widetilde{M}$. We can rewrite this as
\begin{equation*}
\sum_{|\alpha |\leq 2d+3} \int_{D \times \widetilde{M}} |\nabla^\alpha_{x,x'}\square \widetilde{K}_N(t, x, x')| y(x)^{d/2} y(x')^{d/2} dx dx'.
\end{equation*}
Since the curvature is constant $-1$ in the cusp, $\square \widetilde{K}_N$ vanishes for $y(x),y(x')$ larger than $y_0 e^{|t|}$. Hence, $\|V_N\|'$ is bounded by 
\begin{equation*}
\mathcal{O}(1) e^{|t|.d} \int_D \sum_{|\alpha |\leq 2d+3} \int_{\widetilde{M}} |\nabla^\alpha_{x,x'}\square \widetilde{K}_N(t, x, x')| dx' dx.
\end{equation*}
Now, $D$ has finite volume, and we can use formula \eqref{eq:remainder-1}. Up to a $\mathcal{O}(1)e^{\mathcal{O}(|t|)}$ constant, $\| V_N \|'$ is less than the $\sup$ over $x\in D$ and $\alpha\leq 2d+3$ of
\begin{equation*}
\int_{ r  \leq t} \left|\nabla^\alpha_{x,x'}\left\{ (\cosh t - \cosh  r )^{N- \frac{d+2}{2}} \left(-\Delta_{x'} + k^2 - k d\right)u_N \right\} \right| dx'  .
\end{equation*}
Since the curvature of the manifold is bounded by below, we have uniform exponential estimates of the growth of the volume of balls in $\widetilde{M}$, so we can take a $\sup$ of the integrand. Using lemma \ref{lemma:eta-1ound-u_k} and \ref{lemma:r-is-tame}, we deduce that $\Tr V_N(t) = \mathcal{O}(1)e^{\mathcal{O}(|t|)}$.

For the continuity of the remainder, it suffices to observe that the same trace estimates hold for $\partial_t R_N$. 
\end{proof}

\begin{proof}[lemma \ref{lemma:trace-norm}]
We start by recalling the estimate 9.4 from Dimassi-Sj\"ostrand \cite{Dimassi-Sjostrand}. According to this, if $L_1(x,x')$ is the kernel of an operator $L_1:L^2(\R^n) \to L^2(\R^n)$,
\begin{equation*}
\| L_1 \|_{tr} \leq C \sum_{|\alpha|\leq 2n+1} \| \partial_{x,x'}^\alpha L_1 \|_{L^1(\R^n \times \R^n)}.
\end{equation*}
By a partition of unity argument, we obtain a similar estimate for those operators $L_2$ on $M$ whose kernel is supported in $\mathring{M_0}\times \mathring{M_0}$.

Now, by another partition of unity argument, for operators on $M$, we still have to consider three cases. Operators from the cusp to itself, from $M_0$ to the cusp, and vice versa. The three cases can be dealt with using similar techniques, so we will only consider the latter case. With one more partition of unity argument, we can replace $M_0$ by a relatively compact open set $U \subset \subset \R^{d+1}$. Thus, we are trying to estimate the trace norm of an operator $L_3: L^2(Z) \to L^2(U)$ with the kernel supported in $\{(y,\theta)\ |\ y> 2y_0\}\times U$.
 
We take $\chi^0\in C^\infty_c(\R^+, [0,1])$, so that $\chi^0(y)=1$ in $[0,4/3]$ and vanishes outside of $[0,5/3]$. Then let $\chi(y):= \chi^0(y)-\chi^0(2y)$ so that $\chi(y)\geq 0$, $\chi\in C^\infty_c(]2/3, 5/3[)$, equals $1$ on $[5/6,4/3]$ and
\begin{equation*}
\sum_{n\geq 0} \chi(2^{-n}y) = 1,\ \text{for } y>2.
\end{equation*}
We define thus $\chi_n(y,\theta) = \chi( 2^{-n} y/y_0)$. Also let
\begin{equation*}
T_n : f \mapsto \{ (y,\theta) \mapsto 2^{-n d/2}f(2^{n} y,\theta)\}
\end{equation*}
This defines a unitary operator
\begin{equation*}
L^2(Z,\{ 2^n y_0 \leq y \leq 3.2^n y_0\})\to L^2(Z,\{ y_0 \leq y \leq  3y_0\}). 
\end{equation*}
Then
\begin{equation*}
\| L_3 \|_{tr} \leq \sum_{n\geq 0} \| L_3 \chi_n \|_{tr} \leq \sum_{n\geq 0} \| L_3 \chi_n T_n^{-1} \|_{tr}.
\end{equation*}
Each $L_3 \chi_n T_n^{-1}$ is an operator from $\{(y,\theta)\ |\  y_0 \leq y \leq  3y_0\}$ to $U$, and we have already dealt with the case of compact manifolds. It remains to understand
\begin{equation*}
\| \nabla_x^n ( T_n f) \|_{L^1(Z)}.
\end{equation*}
But we have
\begin{equation}\label{eq:transformation-derivatives-translation}
(y\partial_y)^\alpha (y\partial_\theta)^\beta (T_n f) = 2^{-n|\beta|}T_n( (y\partial_y)^\alpha (y\partial_\theta)^\beta f).
\end{equation}
Using the compact case,
\begin{align*}
\| L_3 \chi_n T_n^{-1} \|_{tr} &\leq C \sum_{|\alpha|\leq 2d+3} \| \nabla_{x,x'}^{\alpha} \left\{(T_n)_{x'} L_3 \chi_n\right\}(x,x') \|_{L^1(U\times Z)}.\\
\intertext{Then combining the equivalence of norms, equation \eqref{eq:transformation-derivatives-translation}, and $\|T_n\|_{L^1\to L^1} = 2^{n d/2}$,}
\| L_3 \chi_n T_n^{-1} \|_{tr}	&\leq 2^{nd/2} C \sum_{|\alpha|\leq 2d+3} \| \nabla_{x,x'}^{\alpha} (L_3(x,x')\chi_n(x')) \|_{L^1(U\times Z)}
\end{align*}
Applying the relation \eqref{eq:transformation-derivatives-translation} to $\chi_n$ --- here it is important that $\partial_\theta \chi_n = 0$ --- we conclude that
\begin{equation*}
2^{nd/2} \sum_{n\geq 0} \| \nabla_{x,x'}^{\alpha} (L_3(x,x')\chi_n(x')) \|_{L^1(U\times Z)} \leq C \|{y'}^{d/2} \nabla_{x,x'}^{\alpha} L_3(x,x') \|_{L^1(U\times Z)}.
\end{equation*}
\end{proof}

With lemma \ref{lemma:remainder-Hadamard} in mind, we rewrite the RHS in \eqref{eq:Trace-formula-2} as
\begin{equation}\label{eq:expression-trace-1}
\left\{\lim_{\tau \to \infty} \int_{y\leq e^\tau} \int \psi(t) K_N(t,x,x) dt dx - \kappa \tau \psi(0)\right\} +  \int \psi(t) \Tr R_N(t) dt.
\end{equation}

Let $D_\tau \subset D$ be the part of $D \subset \widetilde{M}$ that projects to $\{ y \leq e^\tau\}$ in $M$. The expression in brackets in the equation above is the limit as $\tau\to \infty$ of
\begin{equation}\label{eq:def-B_tau}
B_\tau :=  \int_{D_\tau} \sum_{\gamma \in \Gamma} \int_\R \psi(t) \widetilde{K}_N(t,x,\gamma x) dt dx  - \kappa \tau \psi(0).
\end{equation}

\subsection{Estimation of contributions to the trace}
\label{sec:contributions-trace}

The main term in $B_\tau$ corresponds to $\gamma = 1$, the diagonal term. To estimate the other terms, we recall some facts on the action of $\Gamma$. 

First, since the injectivity radius of the manifold only goes to $0$ high in the cusp,
\begin{equation*}
R_0 := \inf \left\{d(x,\gamma x) \ \middle| \ x\in \widetilde{M},\ \gamma \neq 1,\ \pi(x,\gamma x) \not\subset Z_\ell,\ \ell=1\dots \kappa\right\} > 0.
\end{equation*}
By $\pi(x,\gamma x)$ we refer to the geodesics on $M$ that lifts to the geodesic between $x$ and $\gamma x$ in $\widetilde{M}$. By $\{\pi(x,\gamma x) \not\subset Z_\ell,\ \ell=1\dots \kappa\}$ we mean that this geodesic of $M$ does not remain in any one cusp.

The universal cover of a cusp is isometric to the open set $U_{y_0}:=\{x= (y,\theta)\ |\ y > y_0\}$ in the half-space model for the hyperbolic space, with the half-space hyperbolic metric $ds^2 = y^{-2} dx^2$. Consider $x\in\widetilde{M}$ such that $d(x,\gamma x) < R_0$ with some $\gamma \neq 1$. Then $x$ and $\gamma x$ have to belong to some open set $\widetilde{U}_0$ of $\widetilde{M}$ isometric to $U_{y_0}$. Since the action of the fundamental group $\Gamma$ is free, $\gamma$ actually preserves $\widetilde{U}_0$, and since it is an isometry of $\widetilde{U}_0$ it acts by translations in the $\theta$ variable.

The set of $\gamma$'s that have such behaviour when restricted to $\widetilde{U}_0$ is the set of $\gamma$'s that preserve $\widetilde{U}_0$, and it is a subgroup of $\Gamma$ isomorphic to $\Lambda_\ell$. The $\ell$ index refers to the index of the cusp onto which $x$ is projected under $\widetilde{M} \to M$. (Recall that $\Lambda_\ell$ is isomorphic to the fundamental group of the cusp $Z_\ell$).

We let $\theta$ be the horizontal coordinate ($\theta \in \R^d$). For future reference, we know that
\begin{equation*}
d((y,0), (y,\theta)) = 2 \arcsinh \frac{|\theta|}{2 y}.
\end{equation*}

Last, we will need the following lemma:
\begin{lemma}\label{lemma:exponential-number-geodesics}
Let $x\in \widetilde{M}$. The number of $\gamma\in\Gamma$ such that $d(x,\gamma x) \leq t$ and such that $\pi(x,\gamma x)$ does not remain in any one cusp, is bounded by $\mathcal{O}(1) e^{\mathcal{O}(|t|)}$, with constants independent of $x$.
\end{lemma} 

This is an elementary result in potential theory for manifolds of negative curvature. For the terminology, we refer to \cite{PPS-12}.
\begin{proof}
Consider the Poincar\'e sum
\begin{equation*}
P_\Gamma(s,x)= \sum_{\gamma \in \Gamma} e^{-s d(x,\gamma x)}.
\end{equation*}
It converges absolutely when $s> \delta_\Gamma$ for all $x\in \widetilde{M}$. This number $\delta_\Gamma> -\infty$ is called the exponent of the group, and it is equal to topological entropy of the manifold. This implies that the number we seek is $\mathcal{O}(1) e^{ (\delta_\Gamma + \epsilon) |t|}$ for any $\epsilon>0$, with a constant depending continuously on $x$ (and $\epsilon$). When $x$ is projected to $M_0$ in $M$, we are done. We have to consider the case when $x$ is projected in some cusp $Z_\ell$. Then, $x$ is in some $\widetilde{U}_0$. Excluding the $\gamma$'s such that $\pi(x,\gamma x)$ does not remain in a cusp corresponds to excluding the $\gamma$'s that preserve $\widetilde{U}_0$. Let the modified sum be:
\begin{equation*}
P_{\Gamma}^\ast(s, x) := \sum_{\gamma \widetilde{U}_0 \cap \widetilde{U}_0=\emptyset} e^{-s d(x,\gamma x)}.
\end{equation*}
One can use comparison of triangles in $\widetilde{M}$ to show that $P_{\Gamma}^\ast(s,x)$ goes to $0$ when $y(x)\to+\infty$ in $\widetilde{U}_0$, and $s> \delta_\Gamma$ is fixed.
\end{proof}

After these preliminaries, we are set to specify which test function $\psi$ we will be considering. Take a function $\rho\in C^\infty_c(]-1,1[)$, even, which equals $1$ around $0$, also take $A>0$. Then let
\begin{equation}\label{eq:def-psi}
\psi(t) = \frac{\sin \lambda t}{\pi t}\rho(At).
\end{equation}
With these assumptions, the last term in \eqref{eq:expression-trace-1} can be bounded:
\begin{equation*}
\int \psi(t) \Tr R_N(t) dt = e^{\mathcal{O}(1/A)}.
\end{equation*} 
For the case of manifolds without conjugate points, we will be considering the regime $A \asymp(1/\log \lambda)$, but until section \ref{sec:conclusion-phase}, $A$ and $\lambda$ can be considered to be independent parameters with $A\leq 1$ and $\lambda\geq 1$.

\subsubsection{The diagonal term}
\label{sec:singularity-at-zero}

We prove 
\begin{lemma}
Assume that $\psi$ takes the form \eqref{eq:def-psi}. Then,
\begin{equation*}
\lim_{\tau \to \infty} \int_{D_\tau} \int \psi(t) \widetilde{K}_N(t,x,x)dt dx = P(\lambda) + \mathcal{O}(1) e^{\mathcal{O}(1/A)}
\end{equation*}
where $P(\lambda)= c_0 \lambda^{d+1} + \dots + c_k \lambda^{d+1-2k} + \dots$.
\end{lemma}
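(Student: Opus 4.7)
The plan is to separate the $t$- and $x$-dependence in the parametrix on the diagonal. Writing
\[
\widetilde{K}_N(t,x,x) \;=\; \sum_{k=0}^N u_k(x,x)\, F_k(t), \qquad F_k(t) := C_0 \bigl(-\tfrac{1}{2}\bigr)^k \sinh|t|\, M_{k-(d+2)/2}(\cosh t - 1),
\]
the double integral factors as $\sum_k \bigl(\int_{D_\tau} u_k(x,x)\, dx\bigr) \bigl(\int \psi(t) F_k(t)\, dt\bigr)$, decoupling $\tau$ from $T$ and reducing the statement to two sub-tasks. (I interpret the lemma as concerning the $\gamma = 1$ diagonal contribution, which is the subject of this subsection; other $\gamma$'s are handled separately.)

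For the spatial side, I would use the already-noted fact that in constant curvature $-1$ the coefficients satisfy $u_0 \equiv 1$ and $u_k(x,x) = 0$ for $k\geq 1$. The vanishing at $k=1$ follows from the identity $(k-1-d/2)^2 - d^2/4 = 0$ at $k=1$ together with $\Delta u_0 = 0$, and an induction propagates it. Because each cusp is exactly hyperbolic, this means that for $k\geq 1$ the integrand is supported in the compact core $M_0$ and $\int_{D_\tau} u_k(x,x)\, dx$ is already independent of $\tau$; for $k=0$ we have $\int_{D_\tau} dx = \vol(D_\tau) \to \vol(M) < \infty$. Setting $C_k := \lim_{\tau\to\infty} \int_{D_\tau} u_k(x,x)\, dx$ therefore gives $\lim_\tau \int_{D_\tau} \widetilde{K}_N(t,x,x)\, dx = \sum_{k=0}^N C_k F_k(t)$.

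For the temporal side, I would identify each $F_k$ through its spectral decomposition. By construction $F_0$ is exactly the wave kernel of $\Hh^{d+1}$ at coincidence (with the $-d^2/4$ shift), so the spherical Plancherel formula yields $F_0(t) = \int_\R \cos(tr)\, \mu_d(r)\, dr$ with $\mu_d(r) = c_d|c(r)|^{-2}$ the Plancherel density. Analogous manipulations for $k\geq 1$ produce $F_k(t) = \int_\R \cos(tr)\, p_k(r)\, dr$ in which $p_k$ is an even function equal to a polynomial of degree $\leq d-2k$ plus an exponentially decaying correction coming from the $\tanh(\pi r)$ factor (only present when $d$ is odd). Pairing with $\psi$ then yields
\[
\int \psi(t) F_k(t)\, dt \;=\; \int_\R \widehat\psi(r)\, p_k(r)\, dr,
\]
whose principal part $\int_{-T}^T p_k(r)\, dr$ is an odd polynomial of degree $d+1-2k$ in $T$, producing exactly the $c_k T^{d+1-2k}$ monomial of $P(T)$ and no others.

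The remaining step, and the main obstacle, is bounding the edge error from $\widehat\psi(r) - \mathbb{1}(|r|\leq T)$. Writing $\widehat\psi$ as the convolution of $\mathbb{1}(|r|\leq T)$ with $A^{-1}\widehat\rho(\cdot/A)$, this correction is Schwartz and concentrated in a band of width $\asymp 1/A$ around $|r| = T$ with rapidly decaying tails. Combined with the polynomial bound $|p_k(r)| \lesssim \langle r\rangle^{d-2k}$, this yields an error of order $\mathcal{O}(T^{d-2k}/A)$, which for the scaling $A \asymp 1/\log T$ is absorbed into $\mathcal{O}(1)\, e^{\mathcal{O}(1/A)}$. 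Summing the finitely many contributions $k=0,\dots,N$ then produces $P(T) + \mathcal{O}(1)\, e^{\mathcal{O}(1/A)}$, as claimed. The fussy point is keeping the exponential $\tanh$ corrections and the edge error under control uniformly in $k$; once that is done, the polynomial structure of $P(T)$ follows by direct inspection of the Plancherel density.
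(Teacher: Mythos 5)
Your spatial factorization, and the use of the diagonal vanishing of $u_k$ in the cusps, match the paper's argument (the paper merely notes that $x\mapsto u_k(x,x)$ is bounded and $\vol M < \infty$, but the conclusion is the same). The difficulty is on the temporal side. You propose to represent each
\[
F_k(t) = C_0\left(-\tfrac12\right)^k\sinh|t|\,M_{k-(d+2)/2}(\cosh t - 1)
\]
via a spectral density $p_k$, writing $F_k(t)=\int_\R\cos(tr)\,p_k(r)\,dr$, and then pair with $\widehat\psi$. For $k=0$ this is legitimate: $F_0$ decays like $e^{-d|t|/2}$, it is the (shifted) wave kernel of $\Hh^{d+1}$ at coincidence, and the Plancherel formula applies. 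But for $k>d/2$ one has $F_k(t)\sim e^{(k-d/2)|t|}$ (since $\cosh t - 1 \sim e^{|t|}/2$ and $M_\alpha$ there is $s_+^\alpha/\Gamma(\alpha+1)$), so $F_k$ is \emph{not} a tempered distribution and admits no representation $\int\cos(tr)\,p_k(r)\,dr$ with locally integrable $p_k$. Since the lemma takes $N>10d$, the majority of the terms you sum over fail this way. The paper's proof handles precisely this: it inserts $1=\rho(t)+(1-\rho(t))$ into the time integral, bounds the $(1-\rho)$ piece (which sees the exponential growth) directly by $\int_0^{1/A}e^{(k-d/2)t}\,dt=\mathcal{O}(1)e^{\mathcal{O}(1/A)}$ --- this is exactly where the $e^{\mathcal{O}(1/A)}$ in the lemma comes from --- and on the $\rho$ piece, which is compactly supported near $t=0$, reads off the conormal singularity $M_{k-d/2-1}(t^2)\cdot(\text{smooth even compactly supported})\cdot\mathrm{sgn}(t)$ whose Fourier transform has a classical expansion with leading power $T^{d+1-2k}$. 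The cutoff $\rho(t)$ is structural, not cosmetic.

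Two smaller points. First, the "edge error" you single out as the main obstacle is not the real one. With $\rho\equiv 1$ near $0$, all moments of $\widehat\rho_A$ of order $\geq 1$ vanish, so convolving a \emph{polynomial} density with $\widehat\rho_A$ reproduces it exactly and produces no $\mathcal{O}(T^{d-2k}/A)$ correction; the genuine error budget in the lemma comes from the $(1-\rho)$ region, not from smoothing $\mathbb{1}_{[-T,T]}$. Second, even for $k\leq d/2$ the claim that $F_k$ has a spectral density equal to a polynomial plus an exponentially decaying $\tanh$-type correction is a Harish--Chandra/Plancherel-type statement that requires its own proof; the paper's route of identifying the $t\to 0$ singularity is more elementary and sidesteps that machinery entirely.
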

This estimate is equivalent to an integrated version of formula (56) in \cite{Berard-77}, and the proof is similar.

\begin{proof}
First, we write out the LHS in the lemma as the sum of
\begin{equation*}
\frac{C_0}{(-2)^k} \lim_{\tau \to \infty} \int_{D_\tau} u_k(x,x) \int  \psi(t)\sinh |t|  M_{k-\frac{d+2}{2}}( \cosh t - \cosh  r )\Big|_{r=0} dt dx. 
\end{equation*}
for $k= 0,\dots, N$. This is
\begin{equation*}
\frac{C_0}{(-2)^k} \lim_{\tau \to \infty} \int_{D_\tau} u_k(x,x) dx \times\int \frac{\sin t \lambda}{\pi t} \rho(At) \sinh|t| M_{k-\frac{d+2}{2}}(\cosh t - 1) dt. 
\end{equation*}
The functions $x \mapsto u_k(x,x)$ are bounded, hence integrable. When $k > d/2 $, the whole integral can be bounded above directly by 
\begin{equation*}
\int_0^{1/A} e^{|t|(k- d/2)} dt = \mathcal{O}(1) e^{\mathcal{O}(1/A)}.
\end{equation*}
When $k \leq d/2 $, let us separate the time integral into two parts, using $1 = \rho(t) + 1-\rho(t)$. The part of the integral supported away from $0$ is $\mathcal{O}(1 + |\log A|)$ as $\lambda \to + \infty$, as can be seen taking the $L^1$ norm of the integrand. We are left with the following integrals ($k\leq d/2$)
\begin{equation*}
\int \frac{\sin t \lambda}{\pi t} \rho(t) \sinh|t| M_{k-(d+2)/2}(\cosh t - 1) dt. 
\end{equation*}
When $t\to 0$, $\cosh t - 1 \sim t^2/2$, so this is the Fourier transform of 
\begin{equation*}
\epsilon(t) M_{k-d/2 - 1}(t^2)\times W(t),
\end{equation*}
where $W\in C^{\infty}_c(]-1,1[)$ is even and $\epsilon(t)$ is the sign of $t$ --- one can check as for $|t| M_\alpha(t^2 - r^2)$ that this a well defined distribution. Since $\epsilon(t) M_{k-d/2 - 1}(t^2)$ is a homogeneous distribution of order $2k-d-2$, and since $W$ is even, we have an expansion of the Fourier transform with powers of $\lambda$ of the same parity, starting with $\lambda^{d+ 1 - 2k}$.

\end{proof}

\subsubsection{The contribution of the cusps}
\label{sec:cusp-contribution}

From the discussion at the start of section \ref{sec:contributions-trace}, if $d(x,\gamma x) < R_0$, then the projection of $x$ in $M$ is in some $Z_\ell$. In that case, $\gamma$ can be identified with an element of $\Lambda_\ell$. We can compute $\cosh(d(x,\gamma x)) = |\gamma|^2/(2y) - 1$ where $|\cdot|$ is understood as the norm on $\Lambda_\ell \subset \R^d$. Since the corresponding geodesic remained in an open set of curvature $-1$, in the contribution to \eqref{eq:expression-trace-1}, only the first term of the Hadamard parametrix is present. In particular, in each cusp, we have contributions from $\gamma = 1$, that we have already computed, contributions from $\gamma$'s for which $d(x,\gamma x)\geq R_0$ that we will estimate, and a specific contribution created by the cusp:
\begin{equation}\label{eq:expression-trace-cusp}
\lim_{\tau \to \infty} - \psi(0)\tau + \int_{y_0}^{e^\tau} \frac{C_0 dy}{y^{d+1}} \sum_{\substack{\gamma \in \Lambda_\ell,\\ \gamma \neq 0}} \int \psi(t) \sinh|t| M_{-\frac{d}{2} - 1}\left[\cosh t - \frac{|\gamma|^2}{2y^2} - 1\right] dt.
\end{equation}

\begin{lemma}
If one replaces $y_0$ by $0$ in equation \eqref{eq:expression-trace-cusp} the result is
\begin{equation*}
-\frac{\lambda}{\pi} \log \lambda + \frac{\mathcal{C}_1(\Lambda_\ell)}{\pi} \lambda +\mathcal{O}(1).
\end{equation*}
Here, the constant $\mathcal{C}_1(\Lambda_\ell)$ only depends on the lattice and $\mathcal{C}_1(\Z)= 1-\log 2$. When $d>1$, this contribution will be smaller than the $\mathcal{O}(\lambda^d/\log \lambda)$ remainder.
\end{lemma}

\begin{proof}
We consider the big integral in \eqref{eq:expression-trace-cusp}, with $y_0$ replaced by $0$ and first make the change of variables $u= |\gamma|/y$. We find a new expression
\begin{equation*}
C_0 \sum_{\gamma \in \Lambda_\ell, \ \gamma\neq 0} \frac{1}{|\gamma |^d} \int_{|\gamma|e^{-\tau}}^{+\infty} u^{d-1} du \int \psi(t) \sinh|t| M_{-d/2 - 1}\left( \cosh t - 1  - \frac{u^2}{2} \right) dt. 
\end{equation*}
Let $\widetilde{\psi}(v_0) = \psi(t(v_0))$, where $\cosh t - 1 = v_0$, and then $v= v_0 - u^2/2$. We can rearrange the above expression in the following way (beware of the integration by parts in $v$):
\begin{equation}\label{eq:contrib-cusp-1}
- 2 C_0 \int M_{-d/2}\left(v \right) \int u^{d-1} \widetilde{\psi}'\left(v + \frac{u^2}{2}\right) \sum_{\gamma \neq 0,\  |\gamma| \leq u e^\tau}\frac{1}{|\gamma |^d} du dv.
\end{equation}
Using simple arguments of comparison between series and integrals, one finds that for fixed $u \neq 0$ and $\tau \to + \infty$,
\begin{equation}\label{eq:def-gamma}
\sum_{\gamma \neq 0,\ |\gamma| \leq u e^\tau} \frac{1}{|\gamma|^d} = \frac{2 \pi^{d/2}}{\Gamma(d/2)} \Big\{\log(u e^\tau) + \overline{\gamma}(\Lambda_\ell) + o(1)\Big\},
\end{equation}
where $\overline{\gamma}(\Lambda_\ell)$ is a constant depending only on $\Lambda_\ell$ (when $d=1$, it is just the Euler-Mascheroni constant). As $\tau \to \infty$, \eqref{eq:contrib-cusp-1} takes the form $a\tau + b + o(1)$ by an argument of dominated convergence. The expression of $a$:
\begin{equation*}
-2 C_0 \frac{2 \pi^{d/2}}{\Gamma(d/2)} \int M_{-d/2}\left(v \right) \int u^{d-1} \widetilde{\psi}'\left(v + \frac{u^2}{2}\right) du dv.
\end{equation*}
Now, let $w= u^2/2$. The last expression becomes
\begin{equation*}
- \int M_{-d/2}\left(v \right) \int M_{d/2 -1}(w)\widetilde{\psi}'\left(v + w\right)  dw dv.
\end{equation*}
Observe that integrating by parts $2m$ times, this is equal to
\begin{equation*}
- \int M_{-d/2 + m}(v) \int M_{d/2 -m -1} (w) \widetilde{\psi}'(v+w) dw dv.
\end{equation*}
Now, we make a distinction. If $d$ is even, taking $m=d/2$, since $M_{-1} = \delta$, this reduces to $\widetilde{\psi}(0) = \psi(0)=\lambda/\pi$. When $d$ is odd, the result is the same. Indeed, taking $m=(d-1)/2$, we find
\begin{equation*}
-\frac{1}{\pi}\int_{v>0, w>0} \frac{1}{\sqrt{vw}} \widetilde{\psi}'(v+w) dvdw = -\frac{1}{\pi}\int_0^{+\infty} \widetilde{\psi}'(z)\int_0^z \frac{dv}{\sqrt{v(z-v)}} = \widetilde{\psi}(0).
\end{equation*}

We get the confirmation that the divergence as $\tau \to + \infty$ is created at the cusps. Now, remember we are looking for the result of the re-normalization, that is, the constant $b$. Its expression is
\begin{equation*}
-2 C_0 \frac{2 \pi^{d/2}}{\Gamma(d/2)} \int M_{-d/2}\left(v \right) \int u^{d-1} \widetilde{\psi}'\left(v + \frac{u^2}{2}\right) \left[\log u + \overline{\gamma}(\Lambda_\ell)\right]du dv.
\end{equation*}
Changing again the parameter with $w=u^2/2$, this is found equal to 
\begin{equation*}
-\int M_{-d/2}\left(v \right) \int M_{d/2 -1}(w)\widetilde{\psi}'\left(v + w\right) \left[\frac{1}{2}\log w + \frac{1}{2}\log 2 + \overline{\gamma}(\Lambda_\ell) \right] dw dv.
\end{equation*}
The constant term contributes in the final expression of $b$ by
\begin{equation}\label{eq:contrib-b-1}
a \times \left[ \frac{1}{2} \log 2 + \overline{\gamma}(\Lambda_\ell) \right] = \frac{\lambda}{\pi}\left[ \frac{1}{2} \log 2 + \overline{\gamma}(\Lambda_\ell) \right].
\end{equation}
On the other hand, in the contribution from the $\log w$ term, change variables again with $v+w = V$, and $w=Vx$ with $x\in [0,1]$, and $V \in \R^+$. The contribution to $b$ becomes
\begin{equation*}
-\frac{1}{2}\int \widetilde{\psi}'\left( V \right) \int M_{-d/2}\left(1-x \right)  M_{d/2 - 1}(x)\log(Vx) dx dV
\end{equation*}
This integral is well defined as we are taking the product of distributions that are not singular at the same points, and the result is a compactly supported distribution (in $x$). From here, the next step is to compute the integrals
\begin{equation}\label{eq:two-integrals}
\int M_{-d/2}\left(1-x \right)  M_{d/2 - 1}(x)\log(x) dx\text{ and }\int M_{-d/2}\left(1-x \right)  M_{d/2 - 1}(x) dx.
\end{equation}
After integrating by parts, the first $x$ integral becomes, when $d$ is even,
\begin{equation*}
2\mathcal{C}(d):=\frac{1}{d/2 - 1} + \dots + \frac{1}{2} + 1 = 2 \sum_{k=1}^{d/2-1} \frac{1}{d-2k}.
\end{equation*}
On the other hand, when $d$ is odd, it is
\begin{equation*}
2\mathcal{C}(d):=2\sum_{k=1}^{\lfloor d/2 \rfloor} \frac{1}{d-2k} + \frac{1}{\pi} \int \frac{\log x dx}{\sqrt{x(1-x)}} = 2\sum_{k=1}^{\lfloor d/2 \rfloor} \frac{1}{d-2k} - 2 \log 2.
\end{equation*}
($\mathcal{C}(1) = - \log 2$.) The final contribution to $b$ will be 
\begin{equation}\label{eq:contrib-b-2}
\mathcal{C}(d)\widetilde{\psi}(0) = \frac{\mathcal{C}(d) \lambda}{\pi}.
\end{equation}

The second integral in \eqref{eq:two-integrals} one can be computed as was $a$ and is equal to $1$. We claim that the corresponding contribution to $b$ is
\begin{equation}\label{eq:main-contrib-cusp}
J_b :=- \frac{1}{2} \int_{0}^{+\infty}\hspace{-10pt} \widetilde{\psi}'(V)\log V dV = -\frac{\lambda \log \lambda}{\pi} - \frac{\lambda \log 2}{2\pi} + \frac{\lambda}{\pi}(1-\overline{\gamma}(\Z)) + \mathcal{O}(1).
\end{equation}
We come back to the $t$ variable in the integral, with $\cosh t - 1 = V$. 
\begin{equation*}
J_b =-\frac{1}{2\pi} \int_{0}^{+\infty} \frac{d}{dt}\left\{ \frac{\sin \lambda t}{t} \rho(A t) \right\} \log \left[2 \sinh^2\frac{t}{2}\right] dt. 
\end{equation*}
This gives a $\lambda\log 2 /(2\pi)$ term, and a term
\begin{equation*}
-\frac{1}{\pi} \int_{0}^{+\infty} \frac{d}{dt}\left\{ \frac{\sin \lambda t}{t} \rho(A t) \right\} \log \left[\sinh\frac{t}{2}\right] dt.
\end{equation*}
We can insert in the differentiated expression $1=\rho(t) + (1-\rho(t))$. The second term is a $o(1/\lambda)$, as we recognize the Fourier transform of a smooth, $L^2$ function, whose derivative is in $L^1$. We are left with
\begin{equation*}
-\frac{1}{\pi} \int_{0}^{+\infty} \frac{d}{dt}\left\{ \frac{\sin \lambda t}{t} \rho(t) \right\} \log \left[\sinh\frac{t}{2}\right] dt.
\end{equation*}
Now, we change variables $u= t\lambda$, integrate by part and find
\begin{equation*}
\frac{1}{\pi} \lim_{\epsilon \to 0^+}\left\{ \int_\epsilon^{+\infty} \frac{1}{2}\frac{\sin u}{u} \rho(u/\lambda) \coth \frac{u}{2\lambda} du + \lambda\frac{\sin \epsilon}{\epsilon} \log \left[\sinh\frac{\epsilon}{2\lambda}\right]\right\}.
\end{equation*}
We recover the main term $- \lambda\log(2\lambda)/\pi$, and
\begin{equation*}
\frac{1}{\pi} \lim_{\epsilon \to 0^+}\left\{ \int_\epsilon^{+\infty} \frac{1}{2}\frac{\sin u}{u} \rho(u/\lambda) \coth \frac{u}{2\lambda} du + \lambda\log (\epsilon)\right\}.
\end{equation*}
But, as $\lambda \to \infty$,
\begin{equation*}
\frac{1}{2}\rho(u/\lambda)\coth \frac{u}{2\lambda} = \frac{\lambda}{u} + (1-\rho(u/\lambda))\frac{\lambda}{u} + \mathcal{O}(1) \rho(u/\lambda)\frac{u}{\lambda}.
\end{equation*}
Both the $(1-\rho)$ and the $u/\lambda$ term will only contribute to $b$ by $\mathcal{O}(1)$. So the last integral we have to compute is
\begin{equation*}
\frac{\lambda}{\pi} \lim_{\epsilon \to 0^+}\left\{ \int_\epsilon^{+\infty} \frac{\sin u}{u^2} du + \log (\epsilon)\right\}.
\end{equation*}
To compute this last constant, one may use Cauchy's theorem, shifting the contour of integration to $i\R^+$; this gives $1-\overline{\gamma}(\Z)$. Then, summing contributions from \eqref{eq:contrib-b-1}, \eqref{eq:contrib-b-2} and \eqref{eq:main-contrib-cusp}, the value of $b$ is found to be 
\begin{equation*}
-\frac{\lambda}{\pi}\log \lambda +\frac{\lambda}{\pi}\left[\cancel{\frac{\log 2}{2}} + \overline{\gamma}(\Lambda_\ell) + \mathcal{C}(d) - \cancel{\frac{\log 2}{2}} + 1 - \overline{\gamma}(\Z) \right] + \mathcal{O}(1).
\end{equation*}

In particular, for $d=1$, we do get the coefficient $1 - \log 2$ for $\lambda$ (which is the value computed by Selberg).
\end{proof}

\subsubsection{The other terms}
\label{sec:other-contributions}

There are two contributions to $B_\tau$ left to compute. The first one is
\begin{equation}\label{eq:remainder-terms-1}
C_0 \int_0^{y_0} \frac{dy}{y^{d+1}} \sum_{\gamma \in \Lambda_\ell,\ \gamma \neq 0} \int \psi(t) \sinh|t| M_{-d/2 - 1}\left[\cosh t - 2\left(\frac{|\gamma|}{2y}\right)^2 - 1\right] dt.
\end{equation}
The other one is the sum over $k=1, \dots, N$ of (up to some constants)
\begin{equation}\label{eq:remainder-terms-2}
\int_{D} \sum_{\substack{\gamma \neq 1,\\ d(x,\gamma x) > R_0}} u_k(x,\gamma x) \int \psi(t) \sinh|t| M_{k-\frac{d}{2} - 1}(\cosh t - \cosh d(x,\gamma x)) dt dx.
\end{equation}

These are \emph{remainder terms}, as we will see. The arguments we use are adapted from B\'erard. The first step is the following: assume $R_0< R < 1/A$, then we consider
\begin{equation*}
I:=\int_{t>0} \psi(t) \sinh|t| M_{k-d/2 - 1}(\cosh t - \cosh R) dt.
\end{equation*}
We insert the cut-off $1 = \rho(t-R) + 1-\rho(t-R)$. The part $1-\rho(t-R)$ only contributes $\mathcal{O}(\lambda^{-\infty})e^{\mathcal{O}(1/A)}$ (it suffices to integrate by parts, and recall that the only $R$'s that contribute are $\mathcal{O}(1/A)$). We are left with the $\rho(t-R)$ part. On the interval where this integral is supported, we can write 
\begin{equation*}
\cosh t - \cosh R = (t-R) \sinh R W(t-R,R),
\end{equation*}
with $W(t-R,R)$ smooth uniformly bounded, not vanishing (and $W(0,R) = 1$). Hence we have another function $\widetilde{W}(u,R)$ satisfying similar assumptions such that
\begin{equation*}
M_{k-d/2-1}(\cosh t - \cosh R) = \sinh(R)^{k-d/2-1} \widetilde{W}(t-R,R) M_{k-d/2 -1}(t-R).
\end{equation*}
As a consequence, $I$ is the imaginary part of
\begin{equation*}
\frac{\sinh(R)^{k-\frac{d}{2}}}{R} \int e^{i\lambda(R+u)} \frac{\rho(A(R+u))\rho(u)\sinh(R + u)}{\sinh(R)(1+\frac{u}{R})}  \widetilde{W}(u, R) M_{k-\frac{d}{2}-1}(u) du.
\end{equation*}
We deduce that when $R_0 < R < 1/A$,
\begin{equation*}
I = \mathcal{O}(\lambda^{d/2 -k}) e^{\mathcal{O}(1/A)}.
\end{equation*}
This is the equivalent of estimate (59) in B\'erard (actually, $I$ has an expansion in powers of $\lambda^{-1}$).

Now, we can estimate the contributions \eqref{eq:remainder-terms-1} and \eqref{eq:remainder-terms-2} to $B_\tau$. First for \eqref{eq:remainder-terms-1}, that contribution is bounded by
\begin{equation*}
\begin{split}
\int_0^{y_0} \frac{dy}{y^{d+1}}& \sum_{\gamma \in \Z^d} \mathcal{O}(\lambda^{\frac{d}{2}})  e^{\mathcal{O}(1/A)} \mathbb{1}\left\{|\gamma|\leq 2 y \cosh (1/2A)\right\}\\
	& = \mathcal{O}(\lambda^{\frac{d}{2}}) e^{\mathcal{O}(1/A)} \sum_{|\gamma| < 2 y_0 \cosh(1/(2A))} \int_{|\gamma|/(2\cosh(1/2A))}^{y_0} \frac{dy}{y^{d+1}}.
\end{split}
\end{equation*}
Let $L= 1/(2\cosh(1/2A))$. It suffices to see that
\begin{equation*}
\sum_{L |\gamma| < y_0} \int_{L|\gamma|}^{y_0} \frac{dy}{y^{d+1}} = \mathcal{O}(L^{-d}) \sum_{0<|\gamma| \leq y_0/L} |\gamma|^{-d} = \mathcal{O}(L^{-d}\log L) = \mathcal{O}(1)e^{\mathcal{O}(1/A)}.
\end{equation*}

Now, to estimate \eqref{eq:remainder-terms-2}, we use the fact that the number of non vanishing terms in the sum is $\mathcal{O}(1)e^{\mathcal{O}(1/A)}$ according to lemma \ref{lemma:exponential-number-geodesics}, and lemma \ref{lemma:eta-1ound-u_k} directly to find that it contributes by $\mathcal{O}(\lambda^{d/2}) e^{\mathcal{O}(1/A)}$ to $B_\tau$.

\subsection{The conclusion}
\label{sec:conclusion-phase}

We can now complete the proof of the third part of theorem \ref{thm:Phase}.

\subsubsection{When there are no conjugate points}

When there are no conjugate points, we gather all the different pieces above:
\begin{align}
\label{eq:regularized-counting}	
\sum_{\mu_n\in\sigma_{pp}(-\Delta)} \widehat{\psi}&\left(\sqrt{\mu_n - \frac{d^2}{4}}\right) -  \frac{1}{2} \int \Psi'  \hat{\psi}+ \frac{1}{4} \hat{\psi}(0) \Tr \phi\left(\frac{d}{2}\right) \\ 
												&= \lim_{\tau \to \infty} \int_{y \leq e^\tau} \int_\R \psi(t) K(t,x,x)dx dt -\kappa \tau \psi(0), \notag \\
												&= \lim_{\tau \to \infty} \int_{y \leq e^\tau} \int_\R \psi(t) K_N(t,x,x)dx dt -\kappa \tau \psi(0) + \mathcal{O}(1)e^{\mathcal{O}(1/A)}, \notag \\
												&= \sum_{k \geq 0}^{d/2} c_k \lambda^{d+1-2k} -  \frac{\kappa \lambda}{\pi} \log \lambda + \frac{\kappa(1-\log 2)\lambda}{\pi} + \mathcal{O}(\lambda^{\frac{d}{2}})e^{\mathcal{O}(1/A)}. \notag
\end{align}

When taking $A$ to be a sufficiently large multiple of $1/\log \lambda$, we can get $e^{\mathcal{O}(1/A)} = \mathcal{O}(\lambda^\epsilon)$ for any fixed $\epsilon >0$. Recall $\widetilde{N}(\lambda)$ is the counting function defined in \eqref{eq:def-tilde-N}:
\begin{equation*}
\widetilde{N}(\lambda) = \sum_{\mu_n \in \sigma_{pp}(-\Delta)} \mathbb{1}\left(\sqrt{\mu_n - \frac{d^2}{4}} \leq \lambda\right) - \frac{1}{2} \int_{-\lambda}^\lambda \Psi'.
\end{equation*}
From the definition of $\psi$, we deduce that the quantity in the LHS of \eqref{eq:regularized-counting} is, up to $\mathcal{O}(1)$,
\begin{equation*}
\int \widetilde{N}(\lambda+ u) \frac{1}{A}\widehat{\rho}\left( \frac{u}{A}\right)du.
\end{equation*}
To recover $\widetilde{N}$, the strategy is to control variations of $\widetilde{N}$ on scales of size $A$ using \eqref{eq:regularized-counting}. First, consider $\chi\in C^\infty_c(]-1/2,1/2[)$ with $\|\chi\|_{L^2} = 1$. Then we can take $\rho= \chi \ast \chi$. In that case, $\widehat{\rho}= \widehat{\chi}^2\geq 0$.  From equation \eqref{eq:decomp-increasing-phase} (in the appendix), we deduce that $\widetilde{N} = P + f$ where $P$ is a $\mathcal{O}(\lambda^{d+1})$ polynomial and $f$ is an increasing function. As a consequence, for $x> 0$,
\begin{align*}
\int [f(\lambda+ x + A/2 + u) &- f(\lambda + u)] \frac{1}{A} \widehat{\rho}\left(\frac{u}{A}\right) du  \\
			&\geq \int_{-1/2}^0 [f(\lambda+x +A(1/2+u))- f(\lambda+Au)] \widehat{\rho}(u)du \\
			& \geq c(f(\lambda+x) - f(\lambda)).
\end{align*}
whence we deduce that when $\lambda_2 > \lambda_1$,
\begin{align*}
&|f(\lambda_2) - f(\lambda_1)| \\
	&\leq \frac{1}{c}\left[ \sum_{k \geq 0}^{d/2} c_k \lambda'^{d+1-2k} -  \frac{\kappa \lambda'}{\pi} \log \lambda' + \frac{\kappa(1-\log 2)\lambda'}{\pi} + \mathcal{O}(\lambda'^{\frac{d}{2}})e^{\mathcal{O}(\frac{1}{A})}\right]_{\lambda_1}^{\lambda_2 + \frac{A}{2}}\\
	&= \mathcal{O}( \lambda_1^d/\log \lambda_1 + |\lambda_2 - \lambda_1| \lambda_1^d + |\lambda_2 - \lambda_1|^{d+1}).
\end{align*}
Using the same argument for $x<0$, we can swap the role of $\lambda_2$ and $\lambda_1$ in the last inequality, i.e remove the condition $\lambda_2 > \lambda_1$. The same estimate holds for $\widetilde{N}(\lambda)$, and we deduce
\begin{equation*}
\widetilde{N}(\lambda) - \int \widetilde{N}(\lambda+Au)\widehat{\rho}(u)du = \mathcal{O}(1)\int \widehat{\rho}(u)[ A\lambda^d |u| + A^{d+1} |u|^{d+1} + A \lambda^d] du, 
\end{equation*}
and that is $\mathcal{O}(\lambda^d/\log \lambda)$. This ends the proof of the third estimate in theorem \ref{thm:Phase}. Indeed, to identify the constant $c_0$ in the RHS of \eqref{eq:regularized-counting}, it suffices to check the equivalent given by M\"uller in proposition 4.13 of \cite{Muller-86}. That is,
\begin{equation*}
c_0 = \frac{\vol M}{(4\pi)^{(d+1)/2} \Gamma(d/2 + 3/2)}= \frac{\vol B^\ast M}{(2\pi)^{d+1}}.
\end{equation*}

\subsubsection{Without assumptions}
\label{sec:no-assumption-curvature}

Now, we do not assume any more that there are no conjugate points. First, we explain why theorem \ref{thm:singularities-trace} holds. Then we give the usual H\"ormander-Levitan bound on the remainder in all generality, and then, we turn to the aperiodic case.

\textbf{The singularities of the wave trace.} For compact manifolds, theorem \ref{thm:singularities-trace} is due to Chazarain \cite{Chazarain-74}, see also \cite{Duistermaat-Guillemin-75}. The proof relies on the fact that $e^{it\sqrt{-\Delta + c}}$ is an FIO, micro-supported on the graph of the geodesic flow. The proof being essentially local, it can be worked out on a manifold with cusps. There is no obstruction to localizing the argument since closed geodesics with length less than some constant live in a compact part of the manifold. The result of these considerations is two-fold. 

First, we recover that periodic geodesic may contribute singularities to the wave trace, at the times corresponding to their algebraic lengths. We also find a singularity at $0$.

We also recover that any other singularity must come from the behaviour at infinity in the cusp. But we have seen that cusps merely modify the usual form of the singularity at zero, and do not create new singularities, and this observation ends the proof.

\textbf{A H\"ormander type remainder.} The compact part of the manifold always has a positive injectivity radius $r>0$. Hence, we can still build a Hadamard parametrix for times $|t| < r$. One can check that all the arguments above apply, albeit replacing $A = c/\log \lambda$ by a fixed $A >0$ sufficiently large. From there, one deduces without assumptions on the curvature, 
\begin{equation*}
\int \widetilde{N}(\lambda+u) \widehat{\rho}(u) du = c \lambda^{d+1} - \frac{\kappa \lambda}{\pi} \log \lambda + \mathcal{O}(\lambda^d).
\end{equation*}
Using the same argument as above, one finds $\widetilde{N}(\lambda)= c \lambda^{d+1} + \frac{\lambda}{\pi} \log \lambda + \mathcal{O}(\lambda^d)$, which proves part 1 of theorem \ref{thm:Phase}.

\textbf{The aperiodic case.} Let us assume now that the flow is aperiodic, i.e, the set of closed geodesics has measure zero. Let us take $T>0$. Let $F_T$ be the set of $\xi \in S^\ast M$ periodic under the geodesic flow with period $|t|\leq T$. Since periodic geodesic of length at most $T$ cannot intersect the part of cusps $\{y> e^{T/2}y_0\}$, $F_T$ is a closed and compact set.

Given $\epsilon >0$, we can find a function $b\in C^\infty_c(S^\ast M)$ such that $b=1$ on a neighbourhood of $F_T$, and $\int |b|^2 \leq \epsilon$. We can assume that $b$ takes values in $[0,1]$. We see $b$ as a $0$-homogeneous function on $T^\ast M$, and let $\hat{b} = \Op(b)$ --- using a quantization such that $\hat{b}$ is compactly supported. We can then find another pseudo-differential operator $\hat{B}$ such that $\hat{B}^\ast \hat{B} + \hat{b}^\ast \hat{b} = \mathbb{1} + \mathcal{O}(\Psi^{-\infty})$. Observe that we can impose that the remainder is compactly supported. The principal symbol of $\hat{B}$ is $B$ such that $|b|^2 + |B|^2 = 1$.

Now, we can follow Ivrii's argument \cite{Ivrii-80}. We are trying to determine 
\begin{equation*}
\OTr \mathbb{1}(\sqrt{-\Delta - d^2/4} \leq \lambda),
\end{equation*}
so we insert the relation $\mathbb{1}  = \hat{B}^\ast \hat{B} + \hat{b}^\ast \hat{b} + \mathcal{O}(\Psi^{-\infty})$ inside the trace, to cut it into three parts $\widetilde{N}_B(\lambda)$ and $\widetilde{N}_b(\lambda)$, plus a remainder that is $\mathcal{O}(1)$:
\begin{align*}
\widetilde{N}_b(\lambda)&= \Tr \mathbb{1}(\hat{b}^\ast \hat{b} \sqrt{-\Delta - d^2/4} \leq \lambda),\\ \widetilde{N}_B(\lambda)&= \OTr \mathbb{1}(\hat{B}^\ast \hat{B} \sqrt{-\Delta - d^2/4} \leq \lambda).
\end{align*}
Usual arguments show that $\widetilde{N}_b(\lambda)$ is well defined, as is the trace of the smoothing compactly supported remainder. For $\widetilde{N}_B$, one may observe that the $0$-Trace is well defined because in a small neighbourhood of the cusp, $B$ acts exactly as the identity. In particular the arguments from section \ref{sec:preliminaries} carry out here almost directly.

The FIO techniques used in \cite{Duistermaat-Guillemin-75} are compatible with the use of pseudo-differential operator by design. In particular the Fourier transforms of $\widetilde{N}_b'$ and $\widetilde{N}_B'$ are only singular at $0$, and at times corresponding to algebraic lengths of geodesics intersecting their micro-support. The type of singularity at $0$ is constrained. For $t_0>0$, let $\rho_{t_0}(t)=\rho(t/t_0)$. Combining these arguments from \cite{Duistermaat-Guillemin-75} and the results on the singularity at $0$ given by the cusp computed in \ref{sec:cusp-contribution}, we find
\begin{equation*}
\widetilde{N}_B\ast \widehat{\rho_T} (\lambda)= a_0^B \lambda^{d+1} + a_1^B \lambda^{d} + \dots - \frac{\kappa}{\pi} \lambda \log \lambda + \frac{\kappa(1-\log 2)}{\pi}\lambda + \mathcal{O}(1)
\end{equation*}
and with $T_0$ smaller than the smallest length of periodic geodesic,
\begin{equation*}
\widetilde{N}_b\ast \widehat{\rho_{T_0}} (\lambda)= a_0^b \lambda^{d+1} + a_1^b \lambda^{d} + \dots + \mathcal{O}(\lambda^{-\infty}).
\end{equation*}
Additionally --- recall that the powers in the singularity at $0$ have all the same parity --- 
\begin{equation*}
a_0^B = \int_{B^\ast M} B^2,\ a_0^b= \int_{B^\ast M} b^2,\ a_1^B + a_1^b = 0.
\end{equation*}

The function $\widetilde{N}_b$ is non-decreasing. In particular, we can refine the argument used for the case of no-conjugate points. We have a constant $C$ depending only on $\rho$ --- which may change at every line --- so that for $u\geq 0$, 
\begin{equation*}
|\widetilde{N}_b(\lambda+u) - \widetilde{N}_b(\lambda)| \leq C [\widetilde{N}_b\ast \widehat{\rho_{T_0}}]_{\lambda}^{\lambda + u + A/2},
\end{equation*}
so that
\begin{equation*}
| \widetilde{N}_b(\lambda)- \widetilde{N}_b\ast \widehat{\rho_{T_0}} (\lambda)| \leq C \int \widehat{\rho}(u) | \widetilde{N}_b\ast \widehat{\rho_{T_0}}(\lambda + (u+1/2)/T_0) - \widetilde{N}_b\ast \widehat{\rho_{T_0}}(\lambda) |du.
\end{equation*}
We deduce that
\begin{equation*}
|\widetilde{N}_b(\lambda) - a_0^b \lambda^{d+1} - a_1^b \lambda^d | \leq \frac{C a_0^b}{T_0} \lambda^d + \mathcal{O}(\lambda^{d-1}).
\end{equation*}

Using cut-offs, we can decompose $\hat{B}^\ast \hat{B}$ as the sum of two operators, one supported for $y > e^{T}y_0$ and the other for $y< 2e^{T}y_0$. The contribution to $\widetilde{N}_B$ from the compactly supported one is non-decreasing. For the other one, we can use formulas \eqref{eq:Maass-Selberg-bis}  and modify formula \eqref{eq:trace-cutoff} to express it as a sum of two terms. One involves non-constant Fourier modes in $\theta$ and is a non-decreasing function of $\lambda$. The other involves the constant Fourier mode of the Eisenstein series and is a quantity of the form $-\kappa T \lambda /\pi + I(T, \lambda)$. Here $I$ is an integral reminiscent of $I(\tau)$ introduced in the proof of formula \ref{eq:trace-cutoff}, involving the trace of the scattering matrix $\phi(s)$. In the end, $I$ is $\mathcal{O}(1)$ when $\lambda \to +\infty$. As a consequence, the arguments above concerning $\widetilde{N}_b$ also apply to $\widetilde{N}_B$ on the time scale $T$:
\begin{equation*}
\begin{split}
&\left|\widetilde{N}_B(\lambda) - a_0^B \lambda^{d+1} - a_1^B \lambda^d + \frac{\kappa}{\pi} \lambda \log \lambda - \frac{\kappa(1-\log 2)}{\pi}\lambda \right| \\
 &\qquad\qquad\qquad\qquad \leq \frac{C a_0^B}{T}\lambda^d + \mathcal{O}(\lambda^{d-1} + \log \lambda).
\end{split}
\end{equation*}
However, $a_0^b = C\int |b|^2 \leq C\epsilon$. Hence we find
\begin{equation*}
\begin{split}
&\left|\widetilde{N}(\lambda) - a_0 \lambda^{d+1} + \frac{\kappa}{\pi} \lambda \log \lambda - \frac{\kappa(1-\log 2)}{\pi}\lambda\right| \\
& \qquad\qquad\qquad\qquad\leq  C\left( \frac{1}{T} + \epsilon\right) \lambda^d + \mathcal{O}(\lambda^{d-1} + \log \lambda).
\end{split}
\end{equation*}
Since $T$ and $\epsilon$ were arbitrary, this proves estimate \eqref{eq:continuous-estimate-periodic-zero} in the theorem.

\section{Counting resonances in negative curvature}
\label{sec:Counting}

Now, we turn to the proof of theorem \ref{thm:Resonances-negative}. Throughout this section, we assume that the curvature of $g$ is negative in $M$. According to the relation \eqref{eq:symmetry-varphi}, we deduce that counting the poles of $\varphi(s)$ in $\{\Re s < d/2\}$ is the same as counting the zeroes of $\varphi$ in $\{\Re s >d/2\}$. So, from now on, we will work in $\{\Re s > d/2\}$, and will be interested in the zeroes of $\varphi$.

A Dirichlet series is a formal series of the form
\begin{equation*}
L(s)= \sum_{k=0}^\infty \frac{ a_k}{\lambda_k^s},
\end{equation*}
where the $\lambda_k$'s form an increasing sequence of positive real numbers, and the $a_k$'s are real numbers. We will only consider convergent Dirichlet series, i.e such series that converge in some half plane $\Re s > s_0$. From \cite{Bonthonneau-3}, we recall that there are constants $\delta(g)>d/2$, $T^\#\in \R$ and Dirichlet series $L_0$, $L_1$, $\dots$, that converge absolutely for $\Re s > \delta(g)$ and such that when $\Re s > \delta(g)$
\begin{equation}\label{eq:parametrix-phi}
\varphi(s) = s^{-\kappa d/2}\left(L_0(s) + \frac{1}{s} L_1(s) + \dots + \mathcal{O}\left(\frac{ e^{ - s T^\#}}{s^\infty} \right) \right).
\end{equation}
We can denote the coefficients of the $L_j$'s in the following way:
\begin{equation}\label{eq:def-a^i_k}
L_j(s) = \sum_k a^j_k e^{- s \ell_k},\text{ with }\ell_0 < \ell_1 <\dots, \text{ for } \Re s > \delta_g.
\end{equation}
It is possible that $a^0_0$ vanishes, so we let $n^\ast(g)$ be the smallest integer $n$ such that $a^0_n \neq 0$. Now, we set
\begin{equation}\label{eq:def-frak{G}}
\mathfrak{G}(M):= \{ g\ |\ n^\ast(g) < \infty\} = \{g\ |\ L_0(s) \text{ is not identically }0\}.
\end{equation}
For metrics in $\mathfrak{G}(M)$, we can define $\ell_\ast = \ell_{n^\ast}$ and $a^0_\ast = a^0_{n^\ast}$. 
According to lemma 4.5 in \cite{Bonthonneau-3}, $\mathfrak{G}(M)$ is open is $C^2$ topology on metrics. According to lemma 4.7 in the same paper, it is also dense in $C^\infty$ topology. It also contains the set of constant curvature metrics. We conjecture that it is actually the whole set of metrics of negative curvature.

\begin{lemma}\label{lemma:eta-1}
Let $g\in \mathfrak{G}(M)$. Let $\eta^0(g)= \sup\{ \Re z\ |\ L_0(z)=0 \}$. Then $\delta(g)\leq \eta^0(g) < \infty$. For any $\epsilon>0$, there is $\eta^0(g) < \eta < \eta^0(g) +\epsilon$ such that $\varphi$ does not vanish on the line $\{ \Re s = \eta \}$. Additionally, there is a $\eta(g) \geq \eta^0(g)$ such that for all $\eta > \eta(g)$, $|L_0|$ has a positive lower bound on the line $\{ \Re s = \eta\}$.
\end{lemma}

\begin{proof}
Since $\varphi$ is a non-zero meromorphic function, its zeroes are discrete and the lemma follows. Hence it suffices to consider the result concerning $L_0$. We decompose it as:
\[
L_0(s) = a^0_\ast e^{-s \ell_\ast}\left( 1 +  \sum_{n\geq 1}\frac{a^0_{\ell_\ast + n}}{a^0_\ast} e^{-s(\ell_{n^\ast + n} - \ell_\ast)} \right).
\]
When $\Re s$ is large enough, the RHS cannot vanish and has actually modulus $> 1/2$.
\end{proof}

This lemma generalizes case III of the main theorem in \cite{Bonthonneau-3}:
\begin{lemma}\label{lemma:resonance-free-zone}
Let $g\in \mathfrak{G}(M)$. There is a constant $C>0$ such that for all $\epsilon >0$ there is $C'>0$ such that $\varphi$ has no zeroes in the region
\[
\left\{z\in \C\ |\ \beta > \eta(g) + \epsilon,\ |\gamma| > C' + e^{C \beta} \right\}.
\]
\end{lemma}

\begin{proof}
In such a region, if $\epsilon'$ is small enough, since the lower bound on $|L_0(\eta + i\lambda)|$, $\lambda\in \R$ depends continuously on $\eta$,
\[
|L_0(s)| > \left| \sum_{j\geq 1} s^{-j} L_j(s) \right|.
\]
Hence a direct application of Rouch\'e's theorem leads to the conclusion.
\end{proof}

Let us introduce $\mathscr{D}_\eta$ the set of Dirichlet series whose abscissa of absolute convergence is strictly smaller than $\eta$, and are bounded for $\Re s > \eta$ --- i.e $\lambda_0 \geq 1$. Also consider $\mathscr{D}_\eta^0$ those in $\mathscr{D}_\eta$ that tend to zero as $\Re s \to \infty$ (i.e $\lambda_0 >1$).

Throughout the rest of this section, we will consider a fixed $\eta>\eta(g)$ taken according to lemma \ref{lemma:eta-1} so that $\varphi$ does not vanish on $\{ \Re s = \eta\}$. In that case, when $\Re s = \eta$, we have
\[
\frac{\varphi'}{\varphi} = \frac{L_0'}{L_0} +\mathcal{O}(1/s),
\]
where the remainder $\mathcal{O}(1/s)$ has an expansion similar to \eqref{eq:parametrix-phi}. We can expand this further and find $\tilde{L}_0\in \mathscr{D}_\eta^0$ such that
\begin{equation}\label{eq:P1}
\frac{\varphi'}{\varphi}=  - \ell_\ast + \tilde{L}_0 + \mathcal{O}\left( \frac{1}{s} \right).
\end{equation}
With a similar reasoning, we can find $\tilde{L}_1$ in $\mathscr{D}_\eta^0$, such that for $\Re s = \eta$
\begin{equation}\label{eq:P2}
\log | \varphi(s)| = - \frac{\kappa d}{2}\log |s| - \eta\ell_\ast + \log |a^0_\ast| + \Re \tilde{L}_1 + \mathcal{O}\left( \frac{1}{s} \right).
\end{equation}

These properties will be used repeatedly in the proof.

\subsection{Some lemmas from harmonic analysis}

Let us start with some abstract lemmas on zeros of holomorphic functions. Take $F$ a function holomorphic in a neighbourhood of a half plane $\{\Re z \geq a\}$. All sums are over the zeros of $F$, denoted by $z=\beta + i\gamma$ --- following Selberg's notations in \cite{Selberg-2}. When a zero is sitting on the smooth boundary of the counting box, it is counted with half multiplicity (and quarter multiplicity on right corners)
\begin{lemma}[Carleman]\label{lemma:Carleman}
Assume $\eta>a$, and $\lambda>0$, and assume that $F$ does not vanish on $\{ \Re z = \eta \}$. Then
\begin{equation*}
\begin{split}
2\pi \sum_{\beta > \eta,\ |z-\eta| < \lambda} \log \frac{\lambda}{|z-\eta|} = \int_{-\pi/2}^{\pi/2} &\log | F(\eta + \lambda e^{i \theta})| d \theta - \pi\log | F(\eta)|\\
					& {} + \int_{-\lambda}^\lambda \log \frac{\lambda}{|t|} \Re \frac{F'(\eta+it)}{F(\eta+it)} dt.
\end{split}
\end{equation*}
\end{lemma}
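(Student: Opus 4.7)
The plan is to apply Green's second identity on the right half-disk, to the harmonic pair $u(z) = \log|F(z)|$ and $v(z) = \log(T/|z-b|)$. After the translation $z \mapsto z-b$, one may assume $b=0$ and work on $D^{+} = \{|z| < T,\ \Re z > 0\}$. Neither $u$ nor $v$ is globally smooth on $D^{+}$: $u$ has logarithmic singularities at the zeros $z_k$ of $F$ inside $D^{+}$ (with $\Delta u = 2\pi \sum_k m_k \delta_{z_k}$), and $v$ has a logarithmic singularity at $0$, which crucially lies on the \emph{boundary} $\partial D^{+}$ (in the middle of the diameter, i.e.\ at a smooth boundary point). Excise small disks $B(z_k,\epsilon)$ and a small half-disk $B(0,\epsilon)\cap D^{+}$ to form $D^{+}_{\epsilon}$, on which both $u$ and $v$ are harmonic. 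Then Green's identity reads
\begin{equation*}
0 \;=\; \int_{\partial D^{+}_{\epsilon}} \bigl( u\, \partial_{n} v - v\, \partial_{n} u\bigr)\, ds .
\end{equation*}

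Next, I would compute each of the four boundary contributions and then let $\epsilon \to 0$. On the outer semicircle $|z|=T$, one has $v=0$ and $\partial_{n}v = \partial_{r}\log(T/r)|_{r=T} = -1/T$, producing the term $-\int_{-\pi/2}^{\pi/2} \log|F(Te^{i\theta})|\, d\theta$. On the vertical diameter $z=it$, the radial gradient of $v$ is orthogonal to the outward normal $-\partial_x$, so $\partial_{n}v = 0$; only $-v\,\partial_{n}u$ survives, and the identity $\partial_{x}\log|F| = \Re(F'/F)$ produces (up to the orientation sign) the integral $\int_{-T}^{T} \log(T/|t|)\,\Re(F'/F)(b+it)\, dt$. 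Around each zero $z_k$ the standard Blaschke-type calculation gives $2\pi m_k \log(T/|z_k|)$ in the limit, summing to the left-hand side.

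The delicate piece is the small half-circle at $0$. Because $0$ sits on the straight part of $\partial D^{+}$ (not in the interior, and not at a corner), the distributional identity $-\Delta v = 2\pi \delta_{0}$ contributes only \emph{half} of its usual mass to the limit, yielding $\pi\, u(0) = \pi\log|F(b)|$ rather than the full $2\pi\log|F(b)|$ one would see in the classical Jensen formula on the full disk. Adding the four contributions, setting the sum to zero, and solving for the zero sum gives the stated identity.

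The main obstacle is the careful bookkeeping: choosing the outward normal on each of the four boundary pieces, taking orientations consistently, and justifying the $\epsilon\to 0$ limits (in particular the half-mass contribution at $b$ and the cancellation $O(\epsilon\log\epsilon)\to 0$ from the terms that do not survive). Once this accounting is done correctly, the formula follows directly from Green's identity, and specializes to the familiar Jensen formula on the full disk when the half-disk is replaced by the full disk (in which case the prefactor becomes $2\pi$ and the diameter integral disappears).
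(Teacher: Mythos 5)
Your approach is exactly the one the paper uses: apply Green's second identity on the right half-disk to the pair $u = \log|F|$ and $v(z) = \log(T/|z-b|)$, pick up the weighted zero count from the excised disks around the $z_k$, a half-mass term $\pi\log|F(b)|$ from the boundary singularity of $v$ at $b$, the semicircle term from $|z-b|=T$ (where $v$ vanishes), and the diameter term from $\{\Re z = b\}$ (where $\partial_n v = 0$). The structure of the accounting is right, and the observation that the singularity of $v$ at $b$ contributes only half its mass because it sits on a smooth boundary point is the correct and key point.

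The one place you should not wave your hands is precisely the place you do: ``up to the orientation sign'' in the diameter term. If you carry out that sign honestly, you do \emph{not} obtain the identity exactly as printed. On $\{\Re z = b\}$ the outward normal is $-\hat x$, so $\partial_n u = -\partial_x \log|F| = -\Re(F'/F)$, and $-v\,\partial_n u = +v\,\Re(F'/F)$. Feeding this into $0 = \sum(\text{boundary pieces})$ and solving for $2\pi\sum\log(T/|z_k-b|)$ produces the semicircle integral, $-\pi\log|F(b)|$, and then $-\int_{-T}^{T}\log(T/|t|)\,\Re(F'/F)(b+it)\,dt$, i.e.\ the \emph{opposite} sign on the last term from the one in the lemma as stated. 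You can sanity-check this with $F(z)=e^z$, $b=0$: the left-hand side is $0$, the semicircle integral is $2T$, $\log|F(0)|=0$, and $\int_{-T}^{T}\log(T/|t|)\,dt = 2T$, so the identity only balances with a minus sign in front of the diameter integral. (This is almost certainly a typo in the paper; it is harmless downstream because the subsequent estimates use only $|\Re(\varphi'/\varphi)|=O(1)$ on $\Re s = b$.) So: same method as the paper, but tighten that last sentence by actually tracking the orientation sign rather than asserting the printed formula.
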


Now, additionally assume that $a=d/2$, that $|F|=1$ on the axis $\{ \Re s = d/2\}$, and that $F$ is real on the real axis.
\begin{lemma}[Counting in big rectangles] 
For $\lambda>0$,
\begin{equation}\label{eq:counting_big_box}
\begin{split}
2\pi \hspace{-0.4cm}\sum_{\substack{d/2 \leq \beta \leq \eta,\\ 0 \leq \gamma \leq \lambda}} \hspace{-0.4cm} (\lambda- \gamma)&(\beta - d/2) = \int_{d/2}^\eta  \log \frac{ |F(x + i\lambda)|}{|F(x)|} (x-d/2) dx  \\
				 {}+ \int_0^\lambda & (\lambda - t) \left[ \Re \frac{F'}{F}(\eta+it) (\eta-d/2) -  \log |F(\eta+it)| \right] dt.
\end{split}
\end{equation}
\end{lemma}

\begin{lemma}[Counting in small rectangles]
Take $c>0$. For $\lambda>0$,
\begin{equation}\label{eq:counting_small_box}
\begin{split}
2\pi \hspace{-0.4cm}&\sum_{\substack{|\gamma - \lambda| \leq \pi/2c \\ d/2 \leq \beta \leq \eta}} \hspace{-0.4cm}\cos(  c(\gamma- \lambda)) \sinh( c (\beta - d/2))  =  \\
								 &\int_{-\pi/2c}^{\pi/2c} \sinh\Big[c(\eta-\frac{d}{2})\Big] \cos( c t) \Re \frac{F'}{F}(\eta + i\lambda + it) dt \\
								{}- c &\int_{-\pi/2c}^{\pi/2c}  \cosh\Big[c(\eta-\frac{d}{2})\Big] \cos(c t) \log |F(\eta+i\lambda +it)| dt \\
								{}+ c &\int_{\frac{d}{2}}^\eta \log \left[ \Big|F(x +i\lambda + i\frac{\pi}{2c} )\Big|.\Big|F(x+ i\lambda- i \frac{\pi}{2c})\Big|\right] \sinh\Big[c(x-\frac{d}{2})\Big] dx.
\end{split}
\end{equation}
\end{lemma}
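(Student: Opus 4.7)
The plan is to apply Green's identity to the harmonic test function $\phi(\beta+i\gamma) = \sinh(c(\beta-d/2))\cos(c(\gamma-T))$, which is the real part of the entire function $h(z) = \sinh(c(z-d/2-iT))$ and hence harmonic. Note that $\phi(\rho) = \sinh(c(\beta_\rho-d/2))\cos(c(\gamma_\rho-T))$ is exactly the weight on the LHS. Since $\Delta \log|F| = 2\pi\sum_{\rho \in R \cap Z(F)}\delta_\rho$ inside the rectangle $R = \{d/2 \le \beta \le b,\ T-\pi/c \le \gamma \le T+\pi/c\}$, Green's identity gives
$$2\pi\sum_{\rho \in R}\phi(\rho) = \oint_{\partial R}\bigl[\phi\,\partial_n\log|F| - \log|F|\,\partial_n\phi\bigr]\,ds.$$
The four boundary contributions are then computed individually.

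The \emph{left} edge $\beta = d/2$ contributes nothing, since $\phi$ vanishes there (as $\sinh(0)=0$) and $\log|F|=0$ by the hypothesis $|F|=1$ on $\{\Re s = d/2\}$. The \emph{right} edge $\beta = b$ reproduces the first two lines of the target formula directly: with $t = \gamma - T$ one reads off $\phi(b+i\gamma) = \sinh(c(b-d/2))\cos(ct)$, $\partial_n\phi(b+i\gamma) = c\cosh(c(b-d/2))\cos(ct)$, and $\partial_n\log|F|(b+i\gamma) = \Re(F'/F)(b+iT+it)$, which assemble to the stated expression.

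The \emph{top and bottom} edges $\gamma = T \pm \pi/c$ are the delicate part: here $\phi = -\sinh(c(x-d/2))$ (since $\cos(\pm\pi)=-1$) does not vanish, while $\partial_n\phi = 0$ (since $\sin(\pm\pi)=0$). Green's identity then produces only a $\phi\,\partial_n\log|F|$ contribution on each horizontal edge, involving the normal derivative of $\log|F|$ rather than $\log|F|$ itself. To convert, I use that $\log|F|$ is harmonic in $R$ away from zeros, so that $\partial_\gamma^2\log|F| = -\partial_\beta^2\log|F|$ holds distributionally modulo the $2\pi\sum_\rho\delta_\rho$ contributions, and then integrate by parts twice in $\beta$ against $\sinh(c(\beta-d/2))$. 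The algebraic identity $\sinh''(c(\beta-d/2)) = c^2\sinh(c(\beta-d/2))$ is exactly what makes the calculation close: the bulk term regenerates the LHS weight, the boundary terms at $\beta=d/2$ vanish (since both $\sinh(0)=0$ and $\log|F|\equiv 0$ there), and the boundary terms at $\beta=b$ combine with those already produced on the right edge. What remains after this reorganization is precisely $c\int_{d/2}^b\sinh(c(x-d/2))\bigl[\log|F(x+iT+i\pi/c)| + \log|F(x+iT-i\pi/c)|\bigr]\,dx$, as stated.

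The main obstacle is the careful bookkeeping in this last step: tracking signs across the up/down reflection of the rectangle, correctly incorporating the distributional $\delta_\rho$ contributions from $\Delta\log|F|$, and verifying that all corner terms and cross-terms between edges cancel so that exactly the three displayed boundary integrals survive. No genuinely new idea beyond Green's identity and the Cauchy--Riemann equations is needed, but the computation is more intricate than in \eqref{eq:counting_big_box} because the natural test function does not vanish on the top and bottom of the rectangle.
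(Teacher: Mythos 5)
Your choice of test function is exactly the paper's: $u(\beta+i\gamma)=\sinh(c(\beta-d/2))\cos(c(\gamma-T))$, the real part of $\sinh(c(z-d/2-iT))$, applied via Green's identity on the rectangle. That is the right idea, and your treatment of the left and right edges is correct. But your account of the top and bottom edges is where the argument breaks, and it breaks in a way that should have alerted you to a typo in the lemma statement rather than to a need for a cleverer manipulation.

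With the rectangle as literally stated, $\gamma-T=\pm\pi/c$, one has $\cos(\pm\pi)=-1\neq 0$ and $\sin(\pm\pi)=0$, so exactly as you observe $u\neq 0$ and $\partial_n u=0$ there; Green's identity then produces only $\int u\,\partial_n\log|F|$ on those edges, and the stated third line cannot come out. In fact the formula as printed is simply false: taking $F(z)=e^{a(z-d/2)}$ with $a\in\R$ (which is zero-free, has $|F|=1$ on $\Re s=d/2$, and is real on $\R$), the LHS is $0$ while the first two lines vanish (because $\int_{-\pi/c}^{\pi/c}\cos(ct)\,dt=0$) and the third line equals $2ac\int_0^{b-d/2}y\sinh(cy)\,dy\neq 0$. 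The intended half-height of the rectangle is $\pi/(2c)$, not $\pi/c$; with that correction $u$ vanishes on the horizontal edges, $\partial_n u=\mp c\sinh(c(\beta-d/2))$ there, the same test computation gives $0=0$, and the claim that the LHS is nonnegative (used in the application after the lemma) becomes true. None of the delicate conversion you outline is then needed: the third displayed integral is literally the $-\log|F|\,\partial_n u$ contribution of the two horizontal sides, and the proof is pure bookkeeping, exactly parallel to \eqref{eq:counting_big_box}.

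Beyond missing the misprint, the conversion you propose is not correct as a mathematical step. Writing the horizontal-edge contributions as
\begin{equation*}
B_T+B_B=-\iint_R \sinh(c(x-d/2))\,\partial_\gamma^2\log|F|\,dx\,d\gamma
\end{equation*}
and substituting $\partial_\gamma^2\log|F|=2\pi\sum_\rho\delta_\rho-\partial_\beta^2\log|F|$, the distributional term contributes $+2\pi\sum_\rho\sinh(c(\beta_\rho-d/2))$ \emph{without} the factor $\cos(c(\gamma_\rho-T))$, because the integrand $\sinh(c(x-d/2))$ is constant in $\gamma$. This does not ``regenerate the LHS weight''; it produces a different weighted sum, and after moving it across Green's identity you are left with $2\pi\sum_\rho(\cos(c(\gamma_\rho-T))-1)\sinh(c(\beta_\rho-d/2))$ on the left. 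Moreover, two integrations by parts in $x$ leave a genuine bulk term $c^2\iint_R\sinh(c(x-d/2))\log|F|$ which has no counterpart in the target formula and does not cancel. So the route you describe simply does not close, and the ``main obstacle'' you flag is not a matter of bookkeeping: it is a sign that the test function should vanish on the top and bottom of the rectangle, which forces the smaller rectangle.
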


\begin{proof}
These three counting lemmas are obtained by considering the fact that $\log |F|$ is a harmonic function where $F$ does not vanish. Hence, if $u$ is another harmonic function on some open set $\Omega$, such that $F$ does not vanish on $\partial \Omega$, by Stoke's theorem ($\partial_\nu$ is the outward pointing normal derivative)
\begin{equation*}
2\pi \sum_{z \in \Omega, F(z)=0} u(z) = \int_{\partial \Omega} u \partial_{\nu} \log |F| - \log |F| \partial_\nu u.
\end{equation*}

If $F$ vanishes on the boundary of $\Omega$ (resp. on a right corner of the boundary), by removing small half (resp. quarter) disks around those zeros, one find that they are counted with multiplicity $1/2$ (resp. $1/4$), in a similar formula.

For \ref{lemma:Carleman}, we consider $u(z) = -\log |z-\eta|/\lambda$, and integrate on the boundary of the half-disk. For \eqref{eq:counting_big_box}, we take $u(z) = (\lambda- \Im z)(\Re z - 1/2)$, and finally $u(z) = \cos(c(\Im s- \lambda)) \sinh( c (\Re z - 1/2))$ for \eqref{eq:counting_small_box}.

\end{proof}
The estimates on counting in boxes are similar to equations (1.1) in \cite{Selberg-2}, and lemma 14, p. 319 in \cite{Selberg-1}. The Carleman lemma is reminiscent of the usual Carleman theorem \cite[\S 3.7]{Titchmarsh}. Last of this section is
\begin{lemma}\label{lemma:mean_value_Dirichlet_series}
Let $L \in \mathscr{D}^0_\eta$. Then, as $\lambda \to \infty$, 
\begin{equation*}
\int_0^\lambda \Re L(\eta + it) dt = \mathcal{O}(1).
\end{equation*}
\end{lemma}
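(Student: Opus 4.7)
The plan is straightforward: expand the Dirichlet series termwise and integrate.

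Writing $L(s) = \sum_{k \geq 0} a_k \lambda_k^{-s}$ with $a_k \in \R$ (since $L$ is real on $\R$) and $1 < \lambda_0 < \lambda_1 < \dots$, the absolute convergence on $\{\Re s \geq b\}$ justifies interchanging the sum with the integral over $[0,T]$. Since $\lambda_k^{-(b+it)} = \lambda_k^{-b} e^{-it \log \lambda_k}$, I get
\begin{equation*}
\int_0^T \Re L(b+it)\, dt = \sum_{k \geq 0} a_k \lambda_k^{-b} \int_0^T \cos(t \log \lambda_k)\, dt = \sum_{k \geq 0} a_k \lambda_k^{-b} \frac{\sin(T \log \lambda_k)}{\log \lambda_k}.
\end{equation*}

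Now, because $\lambda_0 > 1$ (this is built into the definition of $\mathscr{D}^0_b$), we have $\log \lambda_k \geq \log \lambda_0 > 0$ for every $k$, so $|\sin(T \log \lambda_k)/\log \lambda_k| \leq 1/\log \lambda_0$ uniformly in $T$ and $k$. Therefore
\begin{equation*}
\left| \int_0^T \Re L(b+it)\, dt \right| \leq \frac{1}{\log \lambda_0} \sum_{k \geq 0} |a_k| \lambda_k^{-b},
\end{equation*}
and the right-hand side is finite by the assumption of absolute convergence of $L$ at $\Re s = b$. This yields the claimed $\mathcal{O}(1)$ bound, uniformly in $T$.

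There is essentially no obstacle here: the point is simply that the oscillatory factor $e^{-it \log \lambda_k}$ integrates to something bounded in $T$, and the lower bound $\log \lambda_0 > 0$ prevents the denominators from spoiling the summability. The role of the condition $\lambda_0 > 1$ (rather than just $\lambda_0 > 0$) is precisely to provide this uniform lower bound on $\log \lambda_k$; without it the constant term $k=0$ would produce a linear-in-$T$ contribution, which is why one works with $\mathscr{D}^0_b$ instead of $\mathscr{D}_b$.
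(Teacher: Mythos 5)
Your argument is correct and coincides with the paper's own proof: both expand the Dirichlet series termwise, integrate the oscillatory factor to get $\sin(T\log\lambda_k)/\log\lambda_k$, and then use $\lambda_0>1$ together with absolute convergence to bound the resulting series uniformly in $T$. The only cosmetic difference is that you keep the factor $\lambda_k^{-b}$ explicit where the paper absorbs it into the coefficients.
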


\begin{proof}
Since $L$ converges absolutely in the region we are considering, we can write
\begin{equation*}
L(\eta+it) = \frac{c_0}{\lambda_0^{it}} + \frac{c_1}{\lambda_1^{it}} + \dots + \frac{c_k}{\lambda_k^{it}} + \dots
\end{equation*}
where the $c_k$'s are real, the $\lambda_k$'s are real, ordered, and strictly greater than $1$, and the sum converges normally. So we can estimate
\begin{equation*}
\int_0^\lambda \Re L = \sum_k c_k \frac{\sin (\lambda \log \lambda_k )}{\log \lambda_k}.
\end{equation*}
Since all $\lambda_k$'s are bigger than $\lambda_0 > 1$, and since $\sum |c_k| < \infty$, we conclude.
\end{proof}

\subsection{Two estimates}

Before proving the actual Weyl law in the strip, we give two remarkable bounds on other counting quantities. First, we deal with resonances that are not in the strip, and then we give an asymptotics for a weighted counting function. The latter is crucial for the proof of the Weyl estimate. We pick $\eta$ according to lemma \ref{lemma:eta-1}.

\subsubsection{Counting resonances far from the spectrum}

We apply lemma \ref{lemma:Carleman} to $\varphi$ --- recall that zeroes are written $z= \beta + i\gamma$:
\begin{equation*}
\begin{split}
2\pi \hspace{-0.4cm} \sum_{\beta > \eta,\ |z-\eta| < \lambda} \hspace{-0.4cm} \log &\frac{\lambda}{|z-\eta|} = \\
\int_{-\lambda}^\lambda & \log \frac{\lambda}{|t|} \Re \frac{\varphi'(\eta+it)}{\varphi(\eta+it)} dt +\int_{-\pi/2}^{\pi/2} \log | \varphi(\eta + \lambda e^{i \theta})| d \theta - \pi\log | \varphi(\eta)|.
\end{split}
\end{equation*}
Since in the LHS the terms are all positive, we only need to find an upper bound for the RHS. A consequence of \eqref{eq:P1} is that  $\Re \varphi'/\varphi$ is bounded on $\{ \Re s = \eta\}$. Hence the first term is $\mathcal{O}(\lambda)$. The last term there is a constant. For the second term, we can use the estimate in lemma \ref{lemma:Maass-Selberg-estimate2}. This gives an upper bound of the form $\mathcal{O}(\lambda) + I$ where
\begin{equation*}
I = \int_{0}^{\pi/2} F\left(\frac{ \frac{2\eta -d}{2\lambda} + \cos \theta}{\sin \theta}\right) d\theta.
\end{equation*}
Here $F$ is smooth increasing function with $F(0)=0$ and such that $F(x) \sim \kappa \log x$ when $x\to \infty$. Since the integrand is integrable, positive and the resulting integral a decreasing function of $\lambda$, it is $\mathcal{O}(1)$. We have proved estimate \eqref{eq:counting-out-strip-global}, that is:
\begin{equation*}
\#\left\{ z \text{ zero }|\ \beta > \eta,\ |z| \leq \lambda \right\} = \mathcal{O}(\lambda)
\end{equation*}

Still using the Carleman estimate, we seek to prove estimate \eqref{eq:counting-out-strip-local}. Let $0<\tilde{\lambda} < \lambda/2$. Then we have
\begin{equation*}
\begin{split}
2\pi \hspace{-0.4cm} \sum_{\beta > \eta,\ |z-(\eta+i\lambda)| < \tilde{\lambda}} \hspace{-0.4cm} \log &\frac{ \tilde{\lambda}}{|z-(\eta+i\lambda)|} 	= \int_{-\tilde{\lambda}}^{\tilde{\lambda}} \log \frac{\tilde{\lambda}}{|t|} \Re \frac{\varphi'}{\varphi}(\eta+i(\lambda+t)) dt \\
										&+\int_{-\pi/2}^{\pi/2} \log | \varphi(\eta + i\lambda +  \tilde{\lambda} e^{i \theta})| d \theta - \pi\log | \varphi(\eta + i\lambda)|.
\end{split}
\end{equation*}
According to property \eqref{eq:P1}, the first term is $\mathcal{O}(\tilde{\lambda})$ --- independently of $\lambda$. Using property \eqref{eq:P2}, we see that the last term is $\mathcal{O}(\log \lambda)$. According to lemma \ref{lemma:Maass-Selberg-estimate2}, we find that the second term is less than
\begin{equation*}
\mathcal{O}(1+ \tilde{\lambda}) +  \int_{-\pi/2}^{\pi/2} F\left(\frac{ \eta - \frac{d}{2} + \tilde{\lambda} \cos \theta}{|\lambda + \tilde{\lambda} \sin \theta|}\right) d\theta. 
\end{equation*}
Since $\tilde{\lambda} < \lambda /2$, the integrand is uniformly bounded, and we deduce that 
\begin{equation}\label{eq:general-counting-out-of-strip}
\#\left\{\rho\ |\ \Re \rho >\eta,\ |\rho - \eta - i\lambda|\leq \tilde{\lambda}  \right\} = \mathcal{O}(\tilde{\lambda} + \log \lambda).
\end{equation}

To close this section, let us just remark that lemma \ref{lemma:resonance-free-zone} implies that if $\tilde{\lambda} = o(\log \lambda)$, the set we are counting is empty.

\subsubsection{Counting with weights in vertical strips.}

Now, we will prove estimate \eqref{eq:Mangoldt-estimate}. The first step is to prove a local estimate. To this effect, we use formula \eqref{eq:counting_small_box}. With $c=\pi/2$,
\begin{equation*}
\begin{split}
2\pi \hspace{-10pt} \sum_{\substack{\beta\leq \eta \\ |\gamma - \lambda| \leq  1/2}}& \hspace{-10pt} \frac{\pi}{2\sqrt{2}}(\beta - \frac{d}{2}) \leq \\
 								&\int_{-1}^{1} \sinh\Big[\frac{\pi}{2}(\eta-\frac{d}{2})\Big] \cos( \frac{\pi}{2} t) \Re \frac{\varphi'}{\varphi}(\eta + i\lambda + it) dt \\
								{}- \frac{\pi}{2} &\int_{-1}^{1}  \cosh\Big[\frac{\pi}{2}(\eta-\frac{d}{2})\Big] \cos(\frac{\pi}{2} t) \log |\varphi(\eta+i\lambda +it)| dt \\
								{}+ \frac{\pi}{2} &\int_{\frac{d}{2}}^\eta \log \left[ \Big|\varphi(x +i\lambda + i )\Big|.\Big|\varphi(x+ i\lambda- i )\Big|\right] \sinh\Big[\frac{\pi}{2}(x-\frac{d}{2})\Big] dx.
\end{split}
\end{equation*}
In the RHS, the first integral is $\mathcal{O}(1)$ according to estimate \eqref{eq:P1} and the second one is $\mathcal{O}(\log \lambda)$ thanks to \eqref{eq:P2}. For the last integral, we can use lemma \ref{lemma:Maass-Selberg-estimate2} to see that it is less than $\mathcal{O}(1)$. We deduce that
\begin{equation}\label{eq:weighted_counting_resonances_small_box}
\sum_{\substack{ \beta \leq \eta \\ \lambda \leq \gamma \leq \lambda+ 1}} \beta - \frac{d}{2} = \mathcal{O}(\log \lambda).
\end{equation}
The aim is to prove an integrated version of this estimate. Using lemma \ref{lemma:Maass-Selberg-estimate2} again, we see that there is a constant $C>0$ such that
\begin{equation}\label{eq:upper_bound_horizontal_lines}
\int_{d/2}^\eta \log |\varphi(x + i\lambda)| (x-\frac{d}{2}) dx \leq C.
\end{equation}
We will also need a lower bound. Let us proceed as in Selberg \cite[p.21]{Selberg-2}. The LHS in \eqref{eq:counting_small_box} is always positive, so we write
\begin{equation*}
\begin{split}
c \int_{d/2}^\eta \log \Big(\Big|\varphi(x +i\lambda &+ i\frac{\pi}{2c} )\Big|.\Big|\varphi(x+ i\lambda- i \frac{\pi}{2c})\Big|\Big) \sinh\Big[c(x-\frac{d}{2})\Big] dx > \\
								&{} -\int_{-\frac{\pi}{2c}}^{\frac{\pi}{2c}} \sinh\Big[c(\eta-\frac{d}{2})\Big] \cos( c t) \Re \frac{\varphi'}{\varphi}(\eta + i\lambda + it) dt \\
								&{} + c \int_{-\frac{\pi}{2c}}^{\frac{\pi}{2c}}  \cosh\Big[c(\eta-\frac{d}{2})\Big] \cos(c t) \log |\varphi(\eta+i\lambda +it)| dt
\end{split}
\end{equation*}
Equation \eqref{eq:P1} implies that the first term in the RHS is $\mathcal{O}(1)$ ($\Re \varphi'/\varphi$ is bounded on $\Re s = \eta$). Then, \eqref{eq:P2} implies that the second term is $\mathcal{O}(\log \lambda)$ (since $\log |\varphi| = \mathcal{O}(\log \lambda)$ on $\Re s = \eta$). Using the upper bound \eqref{eq:upper_bound_horizontal_lines} on $\log | \varphi(z)|$ given by Maass-Selberg, we find for some constant $C>0$ depending on $c$,
\begin{equation*}
\int_{d/2}^\eta \log \Big|\varphi(x +i\lambda - i\frac{\pi}{2c} )\Big| \sinh\Big[c(x-\frac{d}{2})\Big] dx <  \mathcal{O}(1),
\end{equation*}
so that
\begin{equation*}
\int_{d/2}^\eta \log \Big|\varphi(x +i\lambda + i\frac{\pi}{2c} )\Big| \sinh\Big[c(x-\frac{d}{2})\Big] dx  > - C \log \lambda.
\end{equation*}
We want to replace the $\sinh$ by $(x-d/2)$, but $\log |\varphi|$ could oscillate. The trick is to use again lemma \ref{lemma:Maass-Selberg-estimate2}: for some $c'>0$, $|\varphi(x+i\lambda + i\pi/2c)| \leq 1/c'$ for all $d/2 < x < \eta$ and $\lambda >0$. So that for some constant $C>0$,
\begin{equation*}
\begin{split}
\int_{d/2}^\eta \log \left(c'\Big|\varphi(x +i \lambda + i\frac{\pi}{2c} )\Big|\right)& (x-\frac{d}{2}) dx \geq  \\
C  \int_{d/2}^\eta &\log \left(c'\Big|\varphi(x +i \lambda + i\frac{\pi}{2c} )\Big|\right) \sinh(c(x-\frac{d}{2})) dx.
\end{split}
\end{equation*}
We conclude that
\begin{equation}\label{eq:bound_horizontal_line}
\int_{d/2}^\eta \log |\varphi(x + i\lambda)| (x-\frac{d}{2}) dx = \mathcal{O}(\log \lambda).
\end{equation}

Now, we apply equation \eqref{eq:counting_big_box} (counting in big rectangles) for $\varphi$ at $\lambda$ and at $\lambda+1$, and we subtract the two equalities. We obtain
\begin{equation}\label{eq:exact_expression_counting_big_box_weighted_resonances}
\begin{split}
2\pi \sum_{\substack{d/2 \leq \beta \leq \eta \\ 0 \leq \gamma \leq \lambda}}  (\beta - \frac{d}{2}) =  &\int_0^\lambda \Re \left[ \frac{\varphi'}{\varphi}(\eta+it)\right] (\eta-\frac{d}{2})- \log |\varphi(\eta+it)|dt \\
				 &{}+ \int_{d/2}^\eta \log \frac{ |\varphi(x + i\lambda+i)|}{|\varphi(x + i \lambda)|} (x-\frac{d}{2}) dx \\
				 &{}+ R,
\end{split}
\end{equation}
where the remainder $R$ is
\begin{equation*}
\begin{split}
R = &- \sum_{\substack{d/2 \leq \beta \leq \eta\\ \lambda \leq \gamma \leq \lambda+1}}  (\lambda+1-\gamma) (\beta - \frac{d}{2}) \\
	&{}+ \int_0^1 \Re \left[\frac{\varphi'}{\varphi}(\eta+i(\lambda+1-t))\right] t(\eta-\frac{d}{2})dt \\
	&{}- \int_0^1 \log |\varphi(\eta+i(\lambda+1-t))| t dt.
\end{split}
\end{equation*}

In $R$, the first term is $\mathcal{O}(\log \lambda)$ by \eqref{eq:weighted_counting_resonances_small_box}. The second one is $\mathcal{O}(1)$ by \eqref{eq:P1}, and the last one is $C \log \lambda + \mathcal{O}(1)$ by \eqref{eq:P2}.

In the RHS of \eqref{eq:exact_expression_counting_big_box_weighted_resonances}, the second term is $\mathcal{O}(\log \lambda)$, as stated by equation \eqref{eq:bound_horizontal_line}. For the first term, we can use lemma \ref{lemma:mean_value_Dirichlet_series}, \eqref{eq:P1} and \eqref{eq:P2} to conclude that
\begin{equation}\label{eq:Mangoldt-estimate-bis}
2\pi \sum_{\substack{d/2 \leq \beta < \eta,\\ 0 \leq \gamma \leq \lambda}} \beta - \frac{d}{2} = \frac{\kappa d}{2} \lambda \log \lambda - \left( \frac{\kappa d}{2} + \log a^0_\ast - \frac{d}{2}\ell_\ast\right) \lambda+ \mathcal{O}(\log \lambda). 
\end{equation}
This was the desired estimate \ref{eq:Mangoldt-estimate} in theorem \ref{thm:Resonances-negative}.

\subsection{A Weyl law in the vertical strip}

To obtain a Weyl law for the resonances, we follow again Selberg's ideas. Let
\begin{equation}\label{eq:def-N}
N(\lambda):= \#\left\{ s\in \Res(M,g)\ \middle|\ d-\eta < \Re s \leq \frac{d}{2},\ 0\leq \Im s \leq \lambda \right\}
\end{equation}
($\eta>\eta(g)$ still fixed, given by lemma \ref{lemma:eta-1}). The first step is again a local bound
\begin{lemma}\label{lemma:estimate-log-box}
The number of zeroes in a rectangle with vertices $d/2 + i (\lambda \pm 1/\log \lambda)$ and $\eta + i(\lambda \pm 1/\log \lambda)$ is bounded by $\mathcal{O}(\lambda^d/\log \lambda)$.
\end{lemma}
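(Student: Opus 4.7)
The plan is to bound the zeros of $\varphi$ in the rectangle $\Omega := [d/2, b] \times [T - h, T + h]$ with $h = 1/\log T$ by splitting according to distance to the critical line: those with $\beta - d/2 \leq h$ (\emph{close}) and those with $\beta - d/2 > h$ (\emph{far}). Each group will be controlled by a different estimate.

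For the close zeros, I would invoke the Bérard-type Weyl law (third part of Theorem~\ref{thm:Phase}), which is available since the section assumes negative curvature. Differencing yields $\widetilde{N}(T+h) - \widetilde{N}(T-h) = \mathcal{O}(T^d/\log T)$ because the polynomial main term contributes $\mathcal{O}(hT^d)$. Writing $\widetilde{N} = N_{pp} - S$ and using the decomposition \eqref{eq:decomp-increasing-phase}, the polynomial piece $iQ'/(2\pi)$ is of size $\mathcal{O}(T^d)$ pointwise, hence integrates to $\mathcal{O}(hT^d) = \mathcal{O}(T^d/\log T)$ over $[T-h,T+h]$; what remains is the monotone Poisson-kernel sum. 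Combining with the non-negativity of $N_{pp}(T+h) - N_{pp}(T-h)$, one obtains
\begin{equation*}
\frac{1}{\pi}\sum_{\rho} \left[\arctan\frac{T+h - \Im\rho}{d/2-\Re\rho} - \arctan\frac{T - h - \Im\rho}{d/2-\Re\rho}\right] = \mathcal{O}(T^d/\log T).
\end{equation*}
A direct inspection shows that each resonance $\rho$ with $|\Im\rho - T| \leq h$ and $d/2-\Re\rho \leq h$ contributes at least a positive constant to this sum (the arctan range spans an interval of length $\geq 2$ containing $0$). Hence the number of such resonances is $\mathcal{O}(T^d/\log T)$, and by the symmetry $\rho \mapsto d - \bar\rho$ this bounds the close zeros of $\varphi$ in $\Omega$.

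For the far zeros, I would difference the weighted identity \eqref{eq:exact_expression_counting_big_box_weighted_resonances} at heights $T + h$ and $T - h$. On the RHS, $(T+h)\log(T+h) - (T-h)\log(T-h) = 2 + o(1)$ and the remaining short integrals are $\mathcal{O}(\log T)$ by \eqref{eq:P1}, \eqref{eq:P2}, Maass--Selberg and lemma~\ref{lemma:mean_value_Dirichlet_series}. The $R$-type correction terms behave just as in the original argument. The outcome is
\begin{equation*}
\pi \sum_{\substack{d/2 \leq \beta \leq b \\ T-h \leq \gamma \leq T+h}} (\beta - d/2) = \mathcal{O}(\log T),
\end{equation*}
so each zero with $\beta - d/2 > h = 1/\log T$ contributes at least $1/\log T$, giving a count of $\mathcal{O}((\log T)^2)$.

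Summing the two contributions gives $\mathcal{O}(T^d/\log T) + \mathcal{O}((\log T)^2) = \mathcal{O}(T^d/\log T)$ for $d \geq 1$ and $T$ large. The main difficulty is that neither estimate works uniformly: the Bérard-type bound only sees close zeros (where the Poisson kernel mass is large), while the weighted big-box bound only pins down far zeros (those contributing non-negligibly to $\sum(\beta-d/2)$). The threshold $\beta - d/2 = h$ is precisely the crossover at which the arctan difference in the Poisson integral transitions from order $1$ to order $h/(\beta-d/2)$, which is what makes the two bounds exactly complementary.
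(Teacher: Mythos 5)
Your proposal is correct and follows essentially the same route as the paper: split the rectangle at distance $\sim 1/\log T$ from the critical line, control the close zeros by differencing the Bérard Weyl law of Theorem~\ref{thm:Phase} and using positivity of the Poisson-kernel contributions to $-S'$, and control the far zeros by the weighted count $\sum (\beta - d/2) = \mathcal{O}(\log T)$ in a unit-height strip (the paper invokes \eqref{eq:weighted_counting_resonances_small_box} directly rather than re-differencing \eqref{eq:exact_expression_counting_big_box_weighted_resonances}, but this is the same estimate). The only cosmetic differences are the precise threshold and the shape of the close region (half-ball of radius $2/\log T$ in the paper versus a thin rectangle of width $1/\log T$ in your version).
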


\begin{proof}
First, we use formula \eqref{eq:decomp-increasing-phase}
\begin{equation*}
\begin{split}
\Psi(\lambda + 2/\log \lambda) - &\Psi(\lambda- 2/\log \lambda) = \mathcal{O}(\lambda^d/\log \lambda) \\
	&+ \frac{1}{2\pi}\sum_{z\text{ zero}} \int_{\lambda-2/\log \lambda}^{\lambda+ 2/\log \lambda} \frac{d- 2 \beta}{(\gamma - t)^2 + (\beta - d/2)^2} dt
\end{split}
\end{equation*}
In the RHS, all the terms in the sum are negative, so
\begin{equation*}
\begin{split}
\frac{1}{2\pi}\sum_{|z - d/2 + i \lambda| \leq 2/\log \lambda} &\int_{\lambda-2/\log \lambda}^{\lambda+ 2/\log \lambda} \frac{2 \beta - d}{(\gamma - t)^2 + (\beta - d/2)^2} dt \\
	&\leq \mathcal{O}(\lambda^d/\log \lambda) + \Psi(\lambda - 2/\log \lambda) - \Psi(\lambda + 2/\log \lambda).
\end{split}
\end{equation*}
However, when $|z - d/2 + i\lambda| \leq 2/\log \lambda$, 
\begin{equation*}
\int_{\lambda-2/\log \lambda}^{\lambda+ 2/\log \lambda} \frac{2\beta - d}{(\gamma - t)^2 + (\beta - d/2)^2} dt \geq \pi.
\end{equation*}
Also recall that $\widetilde{N}(\lambda) = N_{pp}(\lambda) - \Psi(\lambda)$, and $N_{pp}$ is non-decreasing. We deduce that the number of zeroes in the half ball of radius $2/\log \lambda$, centered at $d/2 + i\lambda$ is bounded by
\begin{equation*}
\widetilde{N}(\lambda+ 2/\log \lambda) - \widetilde{N}(\lambda - 2 / \log \lambda)  + \mathcal{O}(\lambda^d/\log \lambda) = \mathcal{O}(\lambda^d/\log \lambda),
\end{equation*}
according to estimate \eqref{eq:continuous-estimate-negative-K} in theorem \ref{thm:Phase}(the Weyl law for the phase). To finish the proof of our lemma, it suffices to prove that
\begin{equation*}
\# \left\{ z \ \middle| \ d/2 + \frac{\sqrt{3}}{2\log \lambda} < \beta < \eta ,\ |\gamma - \lambda| \leq 1/\log \lambda\right\} = \mathcal{O}(\lambda^d/\log \lambda).
\end{equation*}
However, according to \eqref{eq:weighted_counting_resonances_small_box} this quantity is bounded above by
\begin{equation*}
\frac{2 \log \lambda}{\sqrt{3}} \sum_{|\gamma - \lambda| \leq 1/\log \lambda} \beta - \frac{d}{2} = \mathcal{O}(\log^2 \lambda).
\end{equation*}
and this is a better bound than we need.
\end{proof}

Now, consider the rectangle $\mathcal{R}_\lambda$ with vertices $d/2$, $\eta$, $d/2 + i \lambda$ and $\eta+ i\lambda$. If we integrate $\varphi'/\varphi$ along its boundary, we obtain the number of zeroes $z=\beta + i \gamma$ of $\varphi$ in $\mathcal{R}_\lambda$. This gives
\begin{equation*}
\begin{split}
N(\lambda) :=N_{pp}(\lambda) + \sum_{\substack{\beta \leq \eta,\ 0\leq \gamma \leq \lambda\\ \varphi(z)=0}} 1 &= N_{pp}(\lambda) - \int_0^\lambda \Psi'(t)dt \\
				&+\frac{1}{2\pi} \int_0^\lambda \Re\left[ \frac{\varphi'}{\varphi}(\eta +i t) \right] dt \\
				& + \frac{1}{2\pi} \int_{d/2}^\eta \Im \left[ \frac{\varphi'}{\varphi}(\sigma) - \frac{\varphi'}{\varphi}(\sigma + i\lambda) \right] d\sigma.
\end{split}
\end{equation*}
According to estimate \eqref{eq:continuous-estimate-negative-K}, the first line in the RHS is 
\begin{equation*}
\frac{\vol(B^\ast M)}{(2\pi)^{d+1}} \lambda^{d+1} - \frac{\kappa \lambda}{\pi}\log \lambda + \frac{\kappa(1-\log 2)}{\pi}\lambda + \mathcal{O}(\lambda^d/\log \lambda).
\end{equation*}
Using \eqref{eq:P1}, one can see that the second term is $ \ell_\ast \lambda/(2\pi) + \mathcal{O}(\log \lambda)$. 

The last term is not so easy to estimate, but we have one more trick up our sleeve: the pigeon-hole principle. Thanks to the local bound given by lemma \ref{lemma:estimate-log-box}, it now suffices to find for each $\lambda$ a $\lambda'$ such that $\lambda- \lambda' = \mathcal{O}(1/\log \lambda)$, and such that the Weyl estimate holds for $\lambda'$. That is, to obtain the result we seek, we only have to prove
\begin{lemma}\label{lemma:pigeonhole}
For $\lambda$ large enough, there is a $\lambda'$ such that $\lambda-\lambda' = \mathcal{O}(1/\log \lambda)$, and
\begin{equation*}
J(\lambda'):=\int_{d/2}^\eta \Im\left[ \frac{\varphi'}{\varphi}(\sigma + i \lambda') \right] d\sigma = Q_2(\lambda') + \mathcal{O}\left( \lambda^{d/2} \log \lambda \right)= o(\lambda^d/\log \lambda).
\end{equation*}
where $Q_2$ is a polynomial of order at most $2\lfloor d/2 \rfloor - 1$.
\end{lemma}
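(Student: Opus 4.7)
The plan is to start from the factorization of Lemma~\ref{lemma:factorization} and take logarithmic derivatives, obtaining
\begin{equation*}
\frac{\varphi'}{\varphi}(s) = iQ'(s) + \sum_{\tau}\left[\frac{1}{s-d+\bar\tau} - \frac{1}{s-\tau}\right],
\end{equation*}
with the sum running over resonances $\tau = \beta_\tau + i\gamma_\tau$ (so $\beta_\tau \leq d/2$). Integrating $\Im[\,\cdot\,]$ along the horizontal segment from $d/2 + iT'$ to $b + iT'$, the polynomial part contributes exactly $Q_2(T') := \Re[Q(b+iT') - Q(d/2+iT')]$; using that $Q(s) + Q(d-s)$ is constant (so $Q$ contains only odd powers of $(s - d/2)$ modulo a constant), the would-be top coefficient from the two endpoints cancels and leaves a polynomial of degree at most $2\lfloor d/2\rfloor - 1$, as required.

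For each resonance, direct evaluation gives
\begin{equation*}
I_\tau(T') := \int_{d/2}^b \Im\!\left[\frac{1}{s-d+\bar\tau}-\frac{1}{s-\tau}\right]\! d\sigma \bigg|_{s=\sigma+iT'} = \arctan\frac{b-\beta_\tau}{T'-\gamma_\tau} - \arctan\frac{b-d+\beta_\tau}{T'-\gamma_\tau} - 2\arctan\frac{d/2-\beta_\tau}{T'-\gamma_\tau},
\end{equation*}
which is trivially bounded by $2\pi$. Crucially, the leading $1/(T'-\gamma_\tau)$ contribution in the Taylor expansion cancels, so that $I_\tau(T') = \mathcal{O}\bigl((d/2 - \beta_\tau)/|T'-\gamma_\tau|^3\bigr)$ for $|T'-\gamma_\tau|$ large. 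I would then split the resonance sum at $|\gamma_\tau - T'| = 1$: the tail is $\mathcal{O}(\log T)$ via dyadic decomposition in $|\gamma_\tau - T'|$ combined with the weighted local count~\eqref{eq:weighted_counting_resonances_small_box} ($\sum_{\gamma_\tau \in [T, T+1]}(d/2 - \beta_\tau) = \mathcal{O}(\log T)$) and $\sum_{k\geq 1} k^{-3} < \infty$.

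The remaining task is to handle resonances with $|\gamma_\tau - T'| \leq 1$. Lemma~\ref{lemma:estimate-log-box} provides the essential input that only $\mathcal{O}(T^d/\log T)$ resonances lie in the band $|\gamma_\tau - T| \leq 1/\log T$. I would select $T' \in [T - c/\log T, T]$ by a pigeonhole argument, keeping $T'$ away from each close $\gamma_\tau$ by a margin large enough that the refined intermediate bound $|I_\tau(T')| \lesssim (d/2-\beta_\tau)/|T'-\gamma_\tau|$ combines with~\eqref{eq:weighted_counting_resonances_small_box} to yield the claimed $\mathcal{O}(T^{d/2}\log T)$ remainder. The hard part will be calibrating this pigeonhole step: one must simultaneously exploit the count of close $\gamma_\tau$'s (via Lemma~\ref{lemma:estimate-log-box}) and the minimum separation between $T'$ and those $\gamma_\tau$'s so that the arctangent contributions telescope to the claimed order. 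Note that even the trivial bound $|I_\tau| \leq 2\pi$ on close resonances already yields $\mathcal{O}(T^d/\log T)$, which is sufficient for the Weyl law conclusion of Theorem~\ref{thm:Resonances-negative} even without the sharper bound.
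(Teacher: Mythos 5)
Your overall strategy is the paper's: factor via Lemma~\ref{lemma:factorization}, split off $Q_2$, bound far resonances via a cubic arctangent cancellation combined with dyadic blocks and the weighted count, and pigeonhole $T'$ away from close resonances using Lemma~\ref{lemma:estimate-log-box}. That skeleton is correct, and your identification of the degree drop in $Q_2$ (from the oddness of $Q(d/2+w)$ in $w$, modulo a constant) is exactly right.

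There is, however, a genuine gap: the factorization sum runs over \emph{all} resonances, and you treat them using only the weighted local count \eqref{eq:weighted_counting_resonances_small_box} and Lemma~\ref{lemma:estimate-log-box}. Both of those estimates only see resonances in the strip $\{d-b < \Re\rho \leq d/2\}$ (i.e.\ zeros $\beta+i\gamma$ of $\varphi$ with $d/2\leq\beta\leq b$). Resonances with $\Re\rho < d-b$ are not covered, so your dyadic tail argument silently uses $\sum_{\gamma_\tau\in[T,T+1]}(d/2-\beta_\tau)=\mathcal{O}(\log T)$ over a set where that estimate does not apply, and your pigeonhole step similarly has no counting input for out-of-strip $\rho$ close to $T$ in the imaginary direction. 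The paper disposes of these separately: for them $|s_T-\rho|\approx|s_T-d+\overline\rho|$, and one uses the Blaschke condition \eqref{eq:convergence-somme}, the ball-count \eqref{eq:general-counting-out-of-strip}, and the logarithmic-width resonance-free zone from the hypothesis $g\in\mathfrak{G}(M)$ (ensuring $|s_T-\rho|\gtrsim\log T$ for out-of-strip $\rho$ with $\Im\rho\approx T$) to absorb their total contribution into a $\mathcal{O}(\sqrt T)$. You need this step, or an equivalent argument; it is not cosmetic. A secondary, smaller issue is that the claimed intermediate bound $|I_\tau|\lesssim(d/2-\beta_\tau)/|T'-\gamma_\tau|$ should carry a $\log$ factor (the paper obtains $|I_\tau|\lesssim (d/2-\beta_\tau)|T'-\gamma_\tau|^{-1}(1+\log|T'-\gamma_\tau|^{-1})$ in the relevant regime $h<|T'-\gamma_\tau|<1$); this does not change the $T^{d/2}\log T$ outcome once $h$ and $\ell$ are calibrated, but it should be stated correctly.
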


This estimate shows that, to some extent, improving the remainder in the continuous Weyl law of section \ref{sec:weyl-phase} automatically improves the Weyl law for the resonances. This was pointed out by Selberg. 

\begin{proof}
Again, we use the decomposition \eqref{eq:decomp-increasing-phase} for $\varphi'/\varphi$ as a sum over the zeroes and a polynomial term. We will deal first with the polynomial. Then we will bound the contribution from zeroes out of the strip, and then we will come back to zeroes in the strip. 

The polynomial contribution to $J(\lambda)$ is
\begin{equation*}
\int_{d/2}^\eta \Re Q'(\sigma + i \lambda) d\sigma = \Re[ Q(\eta + i \lambda) - Q(d/2 + i \lambda) ].
\end{equation*}
However, since $Q$ is polynomial of order at most $2\lfloor d/2 \rfloor + 1$, and real for $\Re s = d/2$, the RHS here is a polynomial in real $\lambda$ of order at most $2\lfloor d/2 \rfloor - 1$. Let us call it $Q_2(\lambda)$.

Next, each zero $z$ contributes to the imaginary part of $\varphi'/\varphi$ by
\begin{equation*}
f_z(s):=\Im\left[ \frac{1}{s- z} - \frac{1}{s - d + \overline{z}} \right] = \frac{ (2\beta - d) (2 \Re s - d) (\gamma - \Im s)}{|s-z|^2 | s- d + \overline{z} |^2}.
\end{equation*}

\begin{lemma}
For $\lambda>1$,
\begin{equation*}
\int_{d/2}^\eta \Im \left[\frac{\varphi'}{\varphi}(\sigma + i\lambda)\right] d\sigma = \sum_{\substack{|d/2 + i\lambda -z|< 1,\\ \varphi(z)=0}} \int_{d/2}^\eta f_z(\sigma + i\lambda) d\sigma + Q_2(\lambda) + \mathcal{O}(\sqrt{\lambda}).
\end{equation*}
\end{lemma}

\begin{proof}
For the zeroes that are not in the strip $\{ d/2 \leq \Re s < \eta\}$, for $s \in i\lambda + [d/2,\eta]$, $|s-z| \approx |s- d +\overline{z}| \approx | z - d/2 - i\lambda|$, so we have the bound
\begin{equation*}
\int_{d/2}^\eta f_z(\sigma + i\lambda) d\sigma = \mathcal{O}(1) \frac{(2\beta - d)(\lambda - \gamma)}{|d/2 + i\lambda - z|^4}.
\end{equation*}
To estimate their total contribution, we have to distinguish three cases: either $| d/2 + i\lambda - z| > \lambda^{-1/6} |d/2 - z|$, or $| d/2 + i\lambda - z| < \lambda^{-1/6} |d/2 - z|$. In the latter case, either $|d/2 + i \lambda -z| \leq \sqrt{\lambda}$ or $|d/2 + i \lambda -z| > \sqrt{\lambda}$.

Assume that $| d/2 + i\lambda - z| > \lambda^{-1/6} |d/2 - z|$. Then the LHS is bounded by
\begin{equation*}
\mathcal{O}(1) \sqrt{\lambda} \frac{2 \beta - d}{|z - d/2|^3}= \sqrt{\lambda}\ \mathcal{O}\left( \frac{2 \beta - d}{|z - d/2|^2} \right).
\end{equation*}
Since $\sum_z (2\beta - d)/|z - d/2|^2 < \infty$, summing over such zeroes will contribute by $\mathcal{O}(\sqrt{\lambda})$ to $J(\lambda)$.

Now, we assume that $|d/2 + i\lambda -z| \leq \lambda^{-1/6} |z - d/2|$. For $\lambda$ large enough, by the triangular inequality, $|d/2 + i\lambda -z| \leq C \lambda^{5/6}$, for some positive constant $C$. 

Each such zero in the half annulus $\{ \sqrt{\lambda} \leq |d/2 + i \lambda -z| \leq C\lambda^{5/6},\ \beta > d/2\}$ contributes by
\begin{equation*}
\mathcal{O}(1)\frac{(2\beta - d)(\lambda - \gamma)}{|d/2 + i\lambda - z|^4} = \mathcal{O}(\frac{\lambda^{5/6} \lambda^{5/6} }{\sqrt{\lambda}^4})= \mathcal{O}(\lambda^{-1/3}).
\end{equation*}
and according to the estimate \eqref{eq:general-counting-out-of-strip}, there are at most $\mathcal{O}(\lambda^{5/6})$ such zeroes, so their total contribution is $\mathcal{O}(\lambda^{1/2})$.

From \eqref{eq:general-counting-out-of-strip}, we also deduce that there are at most $\mathcal{O}(\sqrt{\lambda})$ zeroes such that $|d/2 + i \lambda - z| \leq \sqrt{\lambda}$ outside of the strip. Since we have a logarithmic zone without resonances --- lemma \ref{lemma:resonance-free-zone} --- we deduce that their contribution to $J(\lambda)$ is $\mathcal{O}(\sqrt{\lambda} / \log^2\lambda)$. Hence, the total contribution of resonances out of the strip is $\mathcal{O}(\sqrt{\lambda})$.

Now, we turn to resonances in the strip. Here, we follow Selberg’s argument closely. Let $H>1$, and assume $H < |\lambda - \gamma| < 2 H$. Then $f_z = H^{-3} \mathcal{O}(2\beta - d)$ and
\begin{equation*}
\sum_{H < |\lambda - \gamma| < 2 H} \hspace{-70pt} \hspace{60pt}f_z(\sigma + i \lambda) = \frac{\mathcal{O}(1)}{H^3}\sum_{\substack{|\lambda-\gamma| < 2 H\\ \beta < \eta}} (2\beta - d)
\end{equation*}
Using the ``Riemann-Von Mangoldt'' formula \eqref{eq:Mangoldt-estimate-bis}, the sum in the RHS is 
\begin{equation*}
\mathcal{O}(1)\left[ (\lambda + 2 H) \log|\lambda + 2H| - (\lambda - 2H) \log |\lambda - 2H|+ H + \log (\lambda + 2H) \right]
\end{equation*}
In the case $2H \geq \lambda$, use the concavity of the logarithm to find that it is $\mathcal{O}( H \log H)$. When $\lambda > 2 H$ we see from elementary manipulations that it is $\mathcal{O}(H \log \lambda)$. Summing this estimate for $H = 2^n$, $n\geq 0$ proves that
\begin{equation*}
\sum_{\substack{|\lambda - \gamma| > 1,\\ \beta < b}} f_z(\sigma + i \lambda) = \mathcal{O}(\log \lambda).
\end{equation*}
\end{proof}

Now comes the delicate part. Let $\ell(\lambda)$ be a monotonic integer valued function such that $\ell(\lambda) \to +\infty$, and $h(\lambda)$ also monotonic, such that $h(\lambda) \to 0$. Also assume that $h^\ast(\lambda) =  \ell h(\lambda) = \mathcal{O}(1/\log \lambda)$. Then, from lemma \ref{lemma:estimate-log-box},
\begin{equation*}
\#\{ z,\ \beta \leq b,\ \lambda \leq \gamma \leq \lambda+h^\ast \} = \mathcal{O}(\lambda^d /\log \lambda)
\end{equation*}
By the pigeonhole principle, there is some integer $0 \leq v \leq \ell-1$ so that there are at most $\mathcal{O}(\lambda^d/ \log \lambda)/\ell$ resonances in the strip with $ \lambda + vh \leq \Im \rho \leq \lambda+ (v+1)h $. We let $\lambda' = \lambda + (v+1/2)h$.

Consider $\rho$ in the strip such that $|\gamma - \lambda'| > h$. Then 
\begin{equation*}
\begin{split}
\int_{d/2}^\eta f_z(\sigma + &i \lambda') d\sigma = \int_0^{\eta-d/2} \frac{ 2(2\beta -d)(\gamma - \lambda')\sigma d\sigma}{|z - \sigma -d/2 -i\lambda'|^2 |\sigma + d/2 + i\lambda' - d + \overline{z} |^2}\\
				&= \int_0^{(\eta-d/2)/(\gamma - \lambda')} \frac{ 2(2\beta -d)(\gamma - \lambda')^{-1}\sigma d\sigma}{(1+ (\sigma + \frac{d/2 - \beta}{\gamma - \lambda'})^2)(1 + (\sigma - \frac{d/2-\beta}{\gamma -\lambda'})^2)}\\
				&\leq \int_0^{(\eta-d/2)/(\gamma - \lambda')}\hspace{-5pt} 2\ \frac{ (2\beta -d)(\gamma - \lambda')^{-1} \sigma d\sigma}{(1+ \sigma^2)}\\
				&\leq \mathcal{O}(1)(2\beta - d) (\gamma - \lambda')^{-1} (1 + \log |\gamma - \lambda'|).
\end{split}
\end{equation*}
Observe how this is $(2 \beta - d)\mathcal{O}(h^{-1}|\log h|)$. If we sum this and use \eqref{eq:weighted_counting_resonances_small_box}, we get
\begin{equation*}
\sum_{\substack{h<|\gamma - \lambda'|< 1,\\ \beta < \eta}} \int_{d/2}^\eta f_z(\sigma + i\lambda') d\sigma = \frac{|\log h|}{h}\sum_{\substack{|\gamma - \lambda'| \leq 1,\\ \beta < \eta}} 2 \beta - d = \mathcal{O}(\frac{\log \lambda |\log h|}{h}).
\end{equation*}

The remaining terms are those with $|\gamma - \lambda| < h$. For them, the best bound we can give for their individual contribution is $\mathcal{O}(1)$. Indeed, their contribution is the variation of the argument of $(s-d+ \overline{z})/(s-z)$ as $s$ goes from $d/2 + i\lambda'$ to $\eta + i\lambda'$ along a horizontal line. This variation is at most $\pi$. Hence
\begin{equation*}
\sum_{|\gamma - \lambda| < h} \int_{d/2}^\eta f_z(\sigma + i\lambda')d\sigma = \frac{1}{\ell}\mathcal{O}(\lambda^d/\log \lambda).
\end{equation*}

Combining all the above,
\begin{equation*}
 \int_{d/2}^\eta \Im\left[\frac{\varphi'}{\varphi}(\sigma + i \lambda')\right] d\sigma = Q_2(\lambda) + \mathcal{O}\left( \sqrt{\lambda} + \frac{\log \lambda |\log h|}{h} + \frac{\lambda^d}{\ell \log \lambda} \right).
\end{equation*}
where $Q_2$ is a polynomial of order at most $2\lfloor d/2 \rfloor -1$. Now, we have to choose $\ell$ and $h$ so that $\ell\to +\infty$, $h\to 0$ and $\ell h = \mathcal{O}(1/\log \lambda)$. Consider $\ell(\lambda)=\lfloor \lambda^{d/2}\log^{-2}\lambda\rfloor$ and $h(\lambda)=\lambda^{-d/2}\log \lambda$. Then we have
\begin{equation*}
\frac{\log \lambda |\log h|}{h} + \frac{\lambda^d}{\ell \log \lambda} = \mathcal{O}(\lambda^{d/2} \log \lambda).
\end{equation*}

\end{proof}

\section{Counting with weaker assumptions}

Now, we will explain shortly how to obtain the theorem \ref{thm:Resonances-general}. The argument is very similar to the one in \cite{Bonthonneau-1}, itself based on the computations p282 in \cite{Muller-92}. Note that we are still counting \emph{zeroes} $z=\beta + i\gamma$.

The first observation is that since
\begin{equation*}
\sum_{\varphi(z)=0} \frac{2\beta - d}{|z - d/2|^2} < \infty,
\end{equation*}
we directly have 
\begin{equation*}
\sum_{|z - d/2| \leq \lambda} 2\beta - d = o(\lambda^2).
\end{equation*}
Hence, for $\epsilon >0$.
\begin{equation}\label{eq:bound-far-spectrum-general}
\#\{ z \text{ zero }|\ \beta \geq d/2 + \epsilon,\ |z - d/2| \leq \lambda\} = o\left(\frac{\lambda^2}{\epsilon}\right).
\end{equation}
Uniformly in $\epsilon>0$ as $\lambda \to + \infty$. Since we work in dimension $>2$, we only have to count the zeroes in a strip $\{ d/2 < \beta \leq d/2 + \epsilon \}$.

\subsection{Without any assumption}

According to formula \eqref{eq:decomp-increasing-phase}, together with \eqref{eq:continuous-estimate-general-K}, for $\lambda>0$,
\begin{equation}\label{eq:counting-general-1}
\begin{split}
N_{pp}(\lambda+1) - N_{pp}(\lambda) &+ \frac{1}{\pi}\sum_{\varphi(z)=0} \int_{\lambda}^{\lambda+1} \frac{ 2 \beta - d}{|z - d/2 + it|^2}dt \\
&= \mathcal{O}(\lambda^d).
\end{split}
\end{equation}
But each zero contributes by a positive term in the LHS, so
\begin{equation*}
\sum_{\substack{\beta < d/2 + \epsilon,\\ 1/3 \leq \gamma - \lambda \leq 2/3}} \int_{\lambda}^{\lambda+1} \frac{2 \beta - d}{|z - d/2 + it|^2}dt = \mathcal{O}(\lambda^d).
\end{equation*}
Each of the terms in the LHS is larger than $4\arctan(1/3\epsilon)$. Taking $\epsilon$ small enough, we find the local part of the theorem:
\begin{equation}\label{eq:counting-resonance-local-proof}
\#\{ s \in \Res(M,g)\ |\ |s - d/2 - i\lambda| =\mathcal{O}(1)\} = \mathcal{O}(\lambda^d).
\end{equation}

Now, the global version of \eqref{eq:counting-general-1} is
\begin{equation}\label{eq:counting-general-2}
\begin{split}
\#\{ r_i\ |\ d^2/4 + r_i^2 \text{ eigenvalue, } |r_i|\leq \lambda \} &+ \frac{1}{2\pi} \sum_{\varphi(z)=0} \int_{-\lambda}^{\lambda} \frac{ 2 \beta - d}{|z - d/2 + it|^2}dt \\
&= \frac{\vol(B^\ast M)}{(2\pi)^{d+1}} \lambda^{d+1} + \mathcal{O}(\lambda^d).
\end{split}
\end{equation} 
But, as in the equation 4.9 in \cite{Muller-92}, we find
\begin{equation*}
\begin{split}
\int_{-\lambda}^\lambda \frac{2 \beta - d}{|z - d/2 + it|^2}dt &= 2 \arctan \left[\frac{(2\beta - d) \lambda}{|z - d/2|^2}\left(1-\frac{\lambda^2}{|z - d/2|^2}\right)^{-1}\right] \\
			&+ \begin{cases} 0 & \text{ if }|z- d/2|>\lambda \\ 2\pi & \text{ else} \end{cases}.
\end{split}
\end{equation*}
We can rewrite \eqref{eq:counting-general-2} as
\begin{equation*}
\#\{ s \in \Res(M,g)\ |\ |s-d/2| \leq \lambda\} = 2\frac{\vol(B^\ast M)}{(2\pi)^{d+1}} \lambda^{d+1} + \mathcal{O}(\lambda^d) + R(\lambda),
\end{equation*}
where
\begin{equation*}
R(\lambda) = \frac{1}{\pi}\sum_{\varphi(z)=0}  \arctan \frac{(2\beta-d) \lambda}{|z - d/2|^2}\left(1-\frac{\lambda^2}{|z - d/2|^2}\right)^{-1}.
\end{equation*}
In this sum, the zeroes with $||z-d/2| - \lambda |> 1$ contribute by $\mathcal{O}(\lambda^2)$. Indeed, if $f(x):= |1-x^{-2}|^{-1}$, $f(x) \leq 1 + 1/|x-1|$ for $x\geq 0$. In particular, we find that
\begin{equation*}
\left| 1 - \frac{\lambda^2}{|z-d/2|^2} \right|^{-1} \leq 1 + \lambda.
\end{equation*}
Hence their total contribution is (recall $|\arctan(x)|\leq |x|$)
\begin{equation*}
\mathcal{O}(1)\sum_{\varphi(z)=0} \frac{2 \beta - d}{|z - d/2 |^2} \lambda^2 = \mathcal{O}(\lambda^2).
\end{equation*}
From \eqref{eq:bound_horizontal_line} and \eqref{eq:counting-resonance-local-proof}, we know that there are at most $\mathcal{O}(\lambda^d)$ zeroes with $||z - d/2| - \lambda| \leq 1$, and so we conclude that $R(\lambda) = \mathcal{O}(\lambda^d)$.

\subsection{The aperiodic case}

When the manifold is aperiodic, we can follow the same scheme of proof as above, just tightening the estimates to obtain the $o(\lambda^d)$ result. Let $0<\epsilon <1$. We have the more precise version of \eqref{eq:counting-general-1} --- using \eqref{eq:continuous-estimate-periodic-zero}
\begin{equation*}
\begin{split}
N_{pp}(\lambda+ \epsilon) - N_{pp}(\lambda - \epsilon) &+ \frac{1}{\pi}\sum_{\varphi(z)=0} \int_{\lambda-\epsilon}^{\lambda+\epsilon} \frac{ (2 \beta - d)dt}{|z - d/2 + it|^2} \\
&= C \epsilon \lambda^d + o(\lambda^d).
\end{split}
\end{equation*}
From there we deduce the local part of the theorem
\begin{equation*}
\# \{ s \in \Res(M,g)\ |\ |s - d/2 - i\lambda| \leq \epsilon \} = \mathcal{O}\big[(\epsilon +o(1)) \lambda^d\big].
\end{equation*}
Now, we combine the computations of M\"uller recalled above with estimate \eqref{eq:continuous-estimate-periodic-zero}:
\begin{equation*}
\{ s \in \Res(M,g)\ |\ |s - d/2| \leq \lambda \} = 2\frac{\vol(B^\ast M)}{(2\pi)^{d+1}} \lambda^{d+1} + o(\lambda^d) + R(\lambda)
\end{equation*}
with the same expression for $R(\lambda)$:
\begin{equation*}
R(\lambda) = \frac{1}{\pi}\sum_{\varphi(z)=0}  \arctan \frac{(2\beta - d) \lambda}{|z - d/2|^2}\left(1-\frac{\lambda^2}{|z - d/2|^2}\right)^{-1}.
\end{equation*}
Instead of splitting this sum into two parts as above, here, we need to split it into three parts. First, observe that when $||z - d/2| - \lambda| > 1/\epsilon$,
\begin{equation*}
\left | 1 - \frac{\lambda^2}{|z - d/2|^2} \right|^{-1} \leq 1 + \epsilon \lambda.
\end{equation*}
Hence the total contribution of such zeroes is $\mathcal{O}(\lambda + \epsilon \lambda^2)$. For the zeroes such that $||z - d/2| - \lambda | \leq \epsilon$, we can use the local bound for those close to the axis, and estimate \eqref{eq:bound-far-spectrum-general}. We obtain the following bound on their contribution: $\mathcal{O}((\epsilon + o(1)) \lambda^d) + o(\lambda^2/\epsilon)$ uniformly in $\epsilon$ as $\lambda \to +\infty$. Now, we turn to the case $\epsilon < ||z - d/2| - \lambda | \leq 1/\epsilon$. Their contribution is bounded by
\begin{equation*}
\left(\lambda + \frac{\lambda^2}{\epsilon}\right) \sum_{||z - d/2| - \lambda| \leq 1/\epsilon} \frac{2\beta - d}{|z - d/2|^2}.
\end{equation*}
This is $(\lambda + \lambda^2/\epsilon) \times o(1)$ as long as $\lambda\to + \infty$ and $\lambda \epsilon \to +\infty$. We deduce that as $\lambda \to +\infty$ and assuming $\lambda \epsilon \to + \infty$,
\begin{equation*}
R(\lambda) = \mathcal{O}(\epsilon \lambda^d) + o(1)\left(\lambda^d +  \frac{\lambda^2}{\epsilon}\right).
\end{equation*}
This implies that $R(\lambda) = o(\lambda^d)$.

\appendix

\section{Lemmas for the scattering determinant}

In this section, we gather some lemmas of independent interest on the spectral theory of manifolds with cusp. They are not new, but as far as we know, the statements in the literature are not given in this level of generality.

\begin{lemma}[Maass-Selberg]\label{lemma:Maass-Selberg-estimate2}
The scattering determinant satisfies
\begin{equation*}
| \varphi( \sigma + it )| \leq y_0^{\kappa (2\sigma - d)} \left( \sqrt{ 1 + \frac{ (\sigma-d/2)^2}{t^2}} + \frac{ \sigma-d/2}{|t|}\right)^{\kappa} \text{ when } \sigma \geq d/2.
\end{equation*}
In particular, $|\varphi(s)|$ is bounded in the vertical strip $\{ d/2 < \Re s < b\}$, away from eventual poles in $[d/2, d]$.
\end{lemma}

One can find a proof for this statement in the footnote p.22 in \cite{Selberg-2} and the discussion before equation (7.44) p.652 in \cite{Selberg-1} in the case of constant curvature surfaces. We just check here that is also valid in the general case.

\begin{proof}
To prove this, we use the Eisenstein series. For each cusp $Z_\ell$, recall that $E_\ell(s)$ is a meromorphic family of smooth functions on $M$ that satisfy
\begin{equation}\label{eq:eigenfunction}
-\Delta E_\ell(s) = s(d-s) E_\ell(s).
\end{equation}
Additionally, the zeroth Fourier coefficient of $E_\ell(s)$ in cusp $Z_j$ equals
\begin{equation*}
f_{\ell j}(y,s)= \delta_{\ell j} y^s + \phi_{\ell j}(s) y^{d-s}.
\end{equation*}
We denote by $W(s,y)$ the matrix whose coefficients are the $f_{\ell j}$ --- $y > y_0$. Recall $\Pi_0$ is the $L^2$ orthogonal projector on functions supported in $\{y> y_0\}$ that do not depend on $\theta$. Then we define $G_\ell^\tau(s)= (\mathbb{1}-\mathbb{1}_{y>e^\tau}\Pi_0) E_\ell(s)$. This makes sense if $\tau \geq \log y_0$; this time $\tau$ will not tend to $+\infty$. Let $V(s,\tau)$ be the matrix with coefficients $\int_M G_\ell^\tau\overline{G_j^\tau}$. We set to prove the Maass-Selberg formula: for $\Re s > d/2$,
\begin{equation}\label{eq:Maass-Selberg-relation}
V(s,\tau) = \frac{ e^{2i \Im s \tau} \phi^\ast(s) - e^{-2i\Im s \tau} \phi(s)}{2 i \Im s} + \frac{e^{(2\Re s - d)\tau} - e^{(d-2\Re s)\tau} \phi \phi^\ast(s)}{2\Re s - d}.
\end{equation}
The trick is to compute $\partial_s [2i \Im (s(d-s)) \int_M G_\ell^\tau\overline{G_j^\tau}]$, i.e
\begin{equation*}
(d-2s)\int_M G_\ell^\tau(s)\overline{G_j^\tau(s)} + 2i \Im(s(d-s)) \int_M \partial_s G_\ell^\tau \overline{G_j^\tau(s)}.
\end{equation*}
($\overline{G_j^\tau(s)}$ is anti-holomorphic). Differentiating \eqref{eq:eigenfunction} with respect to $s$, we find this is equal to 
\begin{equation*}
\int_M - \overline{G_j^\tau(s)} \Delta \partial_s G_\ell^\tau   - \overline{s(d-s)} \int_M \partial_s G_\ell^\tau \overline{G_j^\tau}.
\end{equation*}
Now, we can use Stoke's formula twice to have the Laplacian hit $\overline{G_j^\tau}$ instead of $G_\ell^\tau$. The result is a ``boundary term'' plus a cancelling integral over $M$. The boundary term is
\begin{equation*}
\int_M \overline{E_j} \nabla( \mathbb{1} - \mathbb{1}_{y>e^\tau} \Pi_0)\cdot \nabla \partial_s E_\ell - \int_M \partial_s E_\ell \nabla( \mathbb{1} - \mathbb{1}_{y>e^\tau} \Pi_0)\cdot \nabla  \overline{E_j}.
\end{equation*}
This integral only involves zero Fourier modes in the cusps. Since the lattices $\Lambda_{k}$ have co-volume $1$, this is equal to  
\begin{equation*}
\sum_{k }\left[\partial_s f_{\ell k}\overline{\partial_y f_{jk}} - \overline{f_{jk}}\partial_y \partial_s f_{\ell k}  \right](s, e^\tau)
\end{equation*}
This is the $(\ell j)$ coefficient of the matrix
\begin{equation*}
\partial_s W \partial_y  W^\ast -  \partial_s \partial_y W. W^\ast = \partial_s( W.\partial_y W^\ast - \partial_y W. W^\ast).
\end{equation*}
We deduce that there is a anti-meromorphic matrix-valued function $A(s,\tau)$ such that
\begin{equation*}
2i \Im (s(d-s)) V(s,\tau) = A(s,\tau) + (W.\partial_y W^\ast - \partial_y W. W^\ast)(s,e^\tau).
\end{equation*}
After expanding the RHS with the expression for the $f_{\ell j}$, this is equivalent to
\begin{equation*}
\begin{split}
 2i (d-2\Re& s)\Im s V(s,\tau) - A(s,\tau) = \\
&(2\Re s -d)(e^{-2i\Im s\tau} \phi - e^{2i \Im s\tau} \phi^\ast) + 2 i \Im s ( e^{(d-2\Re s)\tau} \phi \phi^\ast - e^{(2\Re s - d)\tau}).
\end{split}
\end{equation*}
We deduce that $A(s)$ vanishes on the unitary axis $2\Re s =d$, and thus has to vanish identically, and the proof of \eqref{eq:Maass-Selberg-relation} is complete.

The matrix $V$ on the LHS of \eqref{eq:Maass-Selberg-relation} is non-negative, so that as a hermitian quadratic form,
\begin{equation*}
\phi \phi^\ast  \leq e^{2\tau(2\Re s - d)} + \frac{ 2 \Re s - d}{\Im s} \frac{e^{2s \tau} \phi^\ast - e^{2\overline{s}\tau} \phi}{2i e^{\tau d}}.
\end{equation*}
We deduce that
\begin{equation*}
\phi \phi^\ast \leq e^{2\tau(2\Re s - d)} \left( \sqrt{1 + \left( \frac{\Re s - d/2}{\Im s} \right)^2} + \frac{\Re s - d/2}{|\Im s|} \right)^2.
\end{equation*}
This formula is true as long as $e^\tau \geq y_0$. 
\end{proof}

Observe that taking the limit $\Re s \to d/2$ in \eqref{eq:Maass-Selberg-relation}, we find
\begin{equation*}
V\left(\frac{d}{2} + i \lambda  ,\tau\right) = 2 \tau \mathbb{1} + \frac{e^{2i \lambda \tau} \phi^\ast - e^{-2 i \lambda \tau} \phi}{2 i \lambda} - \frac{1}{2}(\phi' \phi^\ast + \phi {\phi^\ast}').
\end{equation*}
Since $\phi$ is unitary on the unitary axis, $\phi' \phi^\ast$ is self-adjoint, and we recover the classical form of the Maass-Selberg relations that was used in section \ref{sec:weyl-phase}:
\begin{equation}\label{eq:Maass-Selberg-axis}
\Tr V\left(\frac{d}{2} + i \lambda  ,\tau\right) = 2 \kappa \tau - \frac{\varphi'}{\varphi}\left(\frac{d}{2} + i \lambda  \right) + \Tr \frac{e^{2i \lambda \tau} \phi^\ast - e^{-2 i \lambda \tau} \phi}{2 i \lambda}.
\end{equation}

Now, we turn to the factorization of the scattering determinant. To readers accustomed to scattering theory in one dimension, the following lemma will not be a surprise
\begin{lemma}\label{lemma:factorization}
There is a polynomial $Q$ of order at most $2\lfloor d/2 \rfloor + 1$, such that
\begin{equation*}
\varphi(s) = \varphi\left( \frac{d}{2} \right) e^{i Q(s)} \prod_{z \text{ zero}} \frac{s- z}{s-d + \overline{z} }
\end{equation*}
Additionally, $Q$ is real for $\Re s = d/2$, and $Q(s) + Q(d-s)$ is constant. We also have
\begin{equation}\label{eq:convergence-somme}
\sum_{z \text{ zero}} \frac{ 2\Re z -d}{|z - d/2|^2} < \infty.
\end{equation}
\end{lemma}

This lemma is due to M\"uller (and Zworski, see theorem 3.31 in \cite{Muller-92}) in the case of surfaces (and Selberg for hyperbolic surfaces). Actually, the proof of M\"uller easily extends to the higher dimensional case --- one can find more explanations page 8, proposition 1.1.3 in \cite{Bonthonneau-thesis}. A consequence we used in section \ref{sec:weyl-phase} is the following:
\begin{equation}\label{eq:decomp-increasing-phase}
2\pi \Psi'(\lambda) = i Q'(d/2 + i\lambda) + \sum_{z \text{ zero}} \frac{d - 2 \Re z}{(d-\Re z /2)^2 + (\lambda - \Im z)^2}
\end{equation}
Observe how the second summand in the RHS is a negative function.

\end{document}